\theoremstyle{plain}
\def\endproof{\hspace*{\fill}\mbox{\ \rule{.1in}{.1in}}\medskip }
\newtheorem{theorem}{Theorem}[section]
\newtheorem{corollary}[theorem]{Corollary}
\newtheorem{lemma}[theorem]{Lemma}
\theoremstyle{definition}
\numberwithin{equation}{section}
\numberwithin{figure}{section}
\begin{document}

\title[Elastic plates with residual strain]
{The von K\'arm\'an equations \\ for plates with residual strain}
\author{Marta Lewicka, L. Mahadevan and Reza Pakzad}
\address{Marta Lewicka, University of Minnesota, Department of Mathematics, 
206 Church St. S.E., Minneapolis, MN 55455}
\address{L. Mahadevan, Harvard University, School of Engineering and Applied Sciences,
Cambridge, MA 02138}
\address{Reza Pakzad, University of Pittsburgh, 
Department of Mathematics, 139 University Place, Pittsburgh, PA 15260}
\email{lewicka@math.umn.edu, lm@seas.harvard.edu, pakzad@pitt.edu}
\subjclass{74K20, 74B20}
\keywords{non-Euclidean plates, nonlinear elasticity, Gamma convergence, calculus of variations}
 
\label{firstpage}
 
\maketitle
\begin{abstract}
We provide a  derivation of the F\"oppl-von K\'arm\'an equations for
the shape of and stresses in an elastic plate with residual strains. These might
arise from a range of causes: inhomogeneous growth, plastic
deformation, swelling or shrinkage driven by solvent absorption. Our
analysis gives rigorous bounds on the convergence of the three dimensional
equations of elasticity to the low-dimensional description embodied in
the plate-like description of laminae and thus justifies a recent
formulation of the problem to the shape of growing leaves. It also
formalizes a procedure that can be used to derive other
low-dimensional descriptions of active materials.
\end{abstract}


\section{Introduction}

Laminae or leaf-like structures are thin, i.e. they have one dimension
much smaller than the other two. They arise in science and technology in a 
variety of situations, from atomically thin graphene (thickness $h \sim o(1) nm$) with a lateral span of a few cm, to the earth's crust ($h \sim 10 
km$) which spans thousands of km laterally. On the everyday scale, recently there has been much activity on trying to understand the mechanics of these
laminae when they are active or actuated, as in a growing leaf, a swelling or shrinking sheet of gel, a plastically strained sheet etc. In all these situations, 
the shape of the lamina arises as a consequence of the fact that inelastic effects associated with growth, swelling or shrinkage, plasticity, etc. results in 
a local and heterogeneous incompatibility of strains that leads to local elastic stresses. When combined with force balance, this naturally leads to the
non-trivial shapes that are seen even in the absence of any external forces. A simple experiment suffices to make this point - when growing leaves or 
plastically strained ribbons \cite{Sharon2007} are cut in different directions  to partially relieve the incompatible strains due to growth or plasticity, they 
relax to different shapes.

Recently, these observations have lead to a quest for a theory that describes the coupling between residual strain that might arise from 
a multitude of causes to the ultimate shape of the object. Given that the deformations and strains involved are not necessarily small, this raises an 
age-old question of how to decompose the deformations into the elastic and inelastic parts. A possible approach is to consider the equilibrated shape
of grown bodies and use a multiplicative decomposition of the deformation gradient that borrows from the theory of crystal plasticity which requires the 
notion of a reference configuration with respect to which all displacements are measured. This equilibrium approach leads to conceptual difficulties when 
matter is not conserved, as in growth processes, and suggests an incremental approach embodied in an evolutionary rather than an equilibrium process, 
which also has antecedents in the theory of plasticity. In either case, we then require the knowledge of a constitutive law in addition to a characterization 
of the geometry of the body. For soft materials such as plant and animal tissue, a reasonable assumption is that insofar as the elastic response is
concerned, the material is hyperelastic, while the inelastic deformations follow different laws depending on their origin. For example, swelling or shrinking
gels may be described by a poroelastic theory that couples fluid flow to osmotic stress and deformation, while growth in biological tissues arises from
cell proliferation or cell shape change that have their own description which couple molecular and macroscopic processes. For example, at the molecular 
level, mutants responsible for differential cell proliferation \cite{Nath2003} lead to a range of leaf shapes. At the macroscopic level, stresses induced by 
external loads lead to phenotypic plasticity in algal blades that switch between long, narrow blade-like shapes in rapid flow  to broader undulating shapes 
in slow flow \cite{Koehl2008}.  

Recent work  has focused on some of these questions by using variants of thin plate theory to highlight the self-similar structures that form near the edge 
due to variations in a prescribed  intrinsic metric  of a surface that is asymptotically flat at infinity \cite{Audoly2004}, and also on the case of a circular 
disk with edge-localized growth \cite{kupferman, Dervaux2008}, the shape of a long leaf \cite{Maha} etc. However, the theories used are 
not all identical and some of them arbitrarily ignore certain terms and boundary conditions without prior justification. This suggests that it might be useful
to rigorously derive an asymptotic theory for the shape of a residually strained thin lamina to clarify the role of the assumptions used while shedding light
on the errors associated with the use of the approximate theory that results.
Recently, such rigorous derivations were carried out \cite{FJMhier, lemopa1, lemopa3, LePa2} 
in the context of standard nonlinear elasticity for thin plates and shells.
Further, in \cite{LePa1}, a residually strained version of the Kirchhoff theory for plates 
\cite{kirchhoff} was rigorously derived under the assumption that the target metric is independant of thickness.  

In this paper, we carry out such a derivation 
under a different assumption on the  asymptotic behavior of the prescribed metric
and show that the resulting equations
are identical to those postulated to account for the effects of growth in elastic plates \cite{Maha} and used to describe the shape of a long leaf. We 
limit ourselves to the case when a decomposition of the deformation gradient into an elastic and inelastic part can be carried out - this requires that
it is possible to separate out a reference configuration, and is thus most relevant for the description of plant morphogenesis.
Although our results are valid for thin laminae that might be residually strained by a variety of means, here we will limit ourselves to studying the role of
incompatible growth strain in determining the shape of the laminae. In particular, we will only consider the one-way coupling of growth to shape and ignore 
the feedback from shape back to growth. 


%

\medskip

We now proceed to give an overview of the main results in the paper.

\medskip

{\bf 1. Geometry of incompatible strain.}
For a given mid-plate $\Omega$ which is an open bounded and simply connected
subset of $\mathbb{R}^2$, consider a sequence of $3$d plates:
$$\Omega^h = \Omega\times (-h/2, h/2), \qquad 0<h< <1,$$
viewed as the reference configurations of thin elastic (and homogeneous) 
tissues.
A typical point in $\Omega^h$ has the form $(x',hx_3)$ where 
$x'\in\Omega$ and $|x_3|<1$, and 
we shall make no distinction between
points $x'\in\Omega$ and $(x',0)\in \Omega^h$.

Each $\Omega^h$ is assumed to undergo a growth process, whose instantaneous
growth is described by a smooth tensor $a^h=[a_{ij}^h]
:\overline{\Omega^h}\longrightarrow\mathbb{R}^{3\times 3}$, 
with the property:
$$\forall x\in\overline{\Omega^h} \qquad \det a^h(x)>0.$$
In general, $a^h$ may follow its own dynamical evolution. However, here we will focus only on the effect of the tensor $a^h$ on the effective elastic theory for the grown body. Here we will use a multiplicative decomposition of the deformation gradient that is similar to the one used in plasticity and
also used in various growth formalisms, e.g. \cite{Rod}, $u :\Omega^h \longrightarrow \mathbb{R}^3$:
$$ \nabla u = F a^h.$$ 
The tensor $F= \nabla u (a^h)^{-1}$ corresponds to 
the elastic part of the deformation $u$, and accounts for the reorganization 
of the body $\Omega^h$ in response to the growth tensor $a^h$.  
The above assumes that it is possible to differentiate a reference configuration with respect to which one might measure all relative displacements. 
As mentioned before, this is true for some but certainly not all growth processes. In particular, this is reasonable for botanical growth processes,
but is unlikely for animal growth and remodeling processes that include large scale tissue flows and movements. 

\medskip

{\bf 2. Elastic energy associated with residual strains.}
The elastic energy of $u$ is now a function of $F$ only, and it is given by:
\begin{equation}\label{IhW}
I^h_W(u) = \frac{1}{h}\int_{\Omega^h} W(F) ~\mbox{d}x 
= \frac{1}{h}\int_{\Omega^h} W(\nabla u(a^h)^{-1}) ~\mbox{d}x,
\quad \forall u\in W^{1,2}(\Omega^h,\mathbb{R}^3).
\end{equation}
The elastic energy density $W:\mathbb{R}^{3\times 3}\longrightarrow \mathbb{R}_{+}$ 
is assumed to be compatible with the conditions of normalization 
and frame indifference (with respect to the special orthogonal group 
$SO(3)$ of proper rotations in $\mathbb{R}^3$):
\begin{equation*}\tag{$\clubsuit$i}
\forall F\in \mathbb{R}^{3\times 3} \quad
\forall R\in SO(3) \qquad
W(R) = 0, \quad W(RF) = W(F).
\end{equation*}
Further, we shall require the nondegeneracy of $W$ in the sense that:
\begin{equation*}\tag{$\clubsuit$ii}
\exists c>0\quad \forall F\in \mathbb{R}^{3\times 3}\qquad
W(F)\geq c~ \mathrm{dist}^2(F, SO(3)),
\end{equation*}
and also assume that $W$ is $\mathcal{C}^2$ regular in a neighborhood of $SO(3)$. 

We note that by the polar decomposition theorem every $a^h$ can be uniquely 
written as a product $R^h\tilde a^h$,
where $R^h \in SO(3)$ and $\tilde a^h$ is symmetric positive definite. Hence,
if $W$ is moreover isotropic:
\begin{equation}\label{isotropic}
\forall F\in \mathbb{R}^{3\times 3} \quad
\forall R\in SO(3) \qquad W(FR) = W(F),
\end{equation}
then without loss of generality we can assume $a^h$ to be symmetric
positive definite.

As we shall see, it is instructive to study the following energy functional:
\begin{equation}\label{I0}
I^h_0(u) = \frac{1}{h}
\int_{\Omega^h}\mbox{dist}^2\left(\nabla u(x) (a^h(x))^{-1},
SO(3)\right)~\mbox{d}x,
\end{equation}
as the energy in (\ref{IhW}) obeys a bound from below: $I^h_W\geq c I^h_0$.  

\medskip

{\bf 3. Relation with the non-Euclidean elasticity.}
We shall here compare the above approach with the  {\it target metric}
formalism proposed  in \cite{kupferman} and further developed in 
\cite{LePa1}.
On each $\Omega^h$ we assume that we are given a smooth Riemannian metric
$g^h=[g_{ij}^h]$.
The matrix fields $g^h:\overline{\Omega^h}\longrightarrow\mathbb{R}^{3\times 3}$
are therefore symmetric and strictly positive definite up to the boundary
$\partial \Omega^h$.
Let $\sqrt{g^h}$ be the unique symmetric positive definite square root
of $g^h$ and define, for all $x\in\overline{\Omega^h}$, the set:
\begin{equation}\label{Fh}
\mathcal{F}^h(x)=\left\{R\sqrt{g^h}(x); ~ R\in SO(3)\right\}\subset
\mathbb{R}^{3\times 3}.
\end{equation}
By the polar decomposition theorem, the necessary and sufficient condition 
for a deformation $u$ on $\Omega^h$
to be an orientation preserving realization of $g^h$:
$$(\nabla u)^T\nabla u = g^h \mbox{ and }~ \mbox{det}\nabla u>0 
\quad \mbox{ a.e.  in } \Omega^h$$
is the following:
$$\nabla u(x)\in \mathcal{F}^h(x) \quad \mbox{ a.e. in } \Omega^h.$$
Motivated by this observation, we can replace the energy functional (\ref{I0})
above by:
\begin{equation}\label{Iz0}
\tilde I^h_0(u) = \frac{1}{h}
\int_{\Omega^h}\mbox{dist}^2(\nabla u(x), \mathcal{F}^h(x))~\mbox{d}x
\qquad \forall u\in W^{1,2}(\Omega^h,\mathbb{R}^3),
\end{equation}
measuring (in $L^2$) the pointwise deviation of $u$
from being an orientation preserving realization of the given metric $g^h$.
We note that $\tilde I_0^h$ is comparable in magnitude with $I^h_0$.
Indeed, the intrinsic metric of the material is transformed 
by the growth tensor $a^h$ to the  target metric $g^h = (a^h)^T a^h$
and it is only the symmetric positive definite
part of $a^h$ given by $\sqrt{g^h} = \sqrt{(a^h)^T a^h}$ which plays the
role in determining the deformed shape of the material. 

\bigskip

Our main results can be divided in four major subcategories, 
presented below. 

\medskip

{\bf 4. Main results: Scaling analysis of thin non-Euclidean plates.} 
Given a sequence of growth tensors $a^h$, each close to $\mbox{Id}$ and
defined on $\Omega^h$, the main objective 
is to analyse the behavior of the minimizers of the corresponding energies $I^h_W$
as $h\to 0$.   
Let us recall that, as proved in \cite{LePa1}, the infimum:
$$ m_h = \inf\Big\{I^h_W(u); ~ u\in W^{1,2}(\Omega^h, \mathbb{R}^3)\Big\}$$
must be strictly positive whenever the Riemann curvature tensor 
of the metric $g^h = (a^h)^T a^h $ does not vanish identically on $\Omega^h$.
This condition for $g^h$, under suitable scaling properties, can be translated into a 
first order curvature condition (\ref{lincurvature}) below. In a first step (Theorem \ref {contra_general}) 
we establish a lower bound on $m_h$ in terms of a power law:
$$ m_h \geq c h^\beta, $$ 
for all values of $\beta$ greater than a critical $\beta_0$ in (\ref{critical}). 
This critical exponent  depends on the asymptotic 
behavior of the perturbation $a^h-\mathrm{Id}$ in terms of the thickness $h$. 

Under existence conditions for certain classes of isometries, 
it can be established that actually $m_h \sim h^{\beta_0}$. 
In other words, our analysis includes identification of the magnitude 
of the elastic energy of minimizers of $I^h_W$, in terms of the thicknesss $h$. 

\medskip

The following quantity measures the essential 
variation of the tensors $a^h$:
$$Var(a^h) = \|\nabla_{tan} (a^h_{~|\Omega})\|_{L^\infty(\Omega)} 
+ \|\partial_3 a^h\|_{L^\infty(\Omega^h)} $$
together with their scaling in $h$:
$$\omega_1 = \sup\left\{\omega;~ \lim_{h\to 0}\frac{1}{h^\omega}
Var(a^h) = 0\right\}.$$
The symbol $\nabla_{tan}$ denotes taking derivatives $\partial_1$ and $\partial_2$
in the in-plate directions $e_1=(1,0,0)^T$ and $e_2=(0,1,0)^T$. 
The derivative $\partial_3$ is taken in the out-of-plate direction $e_3=(0,0,1)^T$.
We will work under the following hypothesis:
\begin{equation*}\tag{$\spadesuit$i}
\|a^h\|_{L^\infty(\Omega^h)} + \|(a^h)^{-1}\|_{L^\infty(\Omega^h)} \leq C,
\end{equation*}
\begin{equation*}\tag{$\spadesuit$ii}
\omega_1 > 0
\end{equation*} 
 
\begin{theorem}\label{contra_general}
Assume $(\spadesuit)$. Assume that for some $\omega_0\geq 0$, 
there exists the limit:
\begin{equation*}\label{Eg}
\epsilon_g (x') = \lim_{h\to 0} \frac{1}{h^{\omega_0}}
\fint_{-h/2}^{h/2}a^h(x',t) - \mathrm{Id}~\mathrm{d}t
\quad \mbox{ in } L^2(\Omega, \mathbb{R}^{3\times 3}).
\end{equation*}
which moreover satisfies:
\begin{equation}\label{lincurvature} 
\mathrm{curl}^T\mathrm{curl}~(\epsilon_g)_{2\times 2} \not\equiv 0
\end{equation} 
and that $\omega_0<\min\{2\omega_1,\omega_1 + 1\}$. Then, for every
$\beta$ with: 
\begin{equation}\label{critical} 
\beta> \beta_0= \max\{\omega_0+2, 2\omega_0\},
\end{equation} we have:
$\displaystyle{\limsup_{h\to 0}\frac{1}{h^\beta}\inf I_0^h = +\infty}$.
\end{theorem}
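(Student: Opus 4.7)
I would argue by contradiction along the scheme of \cite{FJMhier, LePa1}. Suppose that, along a subsequence $h\to 0$, $\inf I_0^h\leq Ch^\beta$ for fixed $\beta>\beta_0$, and pick near-minimizers $u^h\in W^{1,2}(\Omega^h,\mathbb{R}^3)$. The target is to show $\mathrm{curl}^T\mathrm{curl}\,\mathrm{sym}(\epsilon_g)_{2\times 2}\equiv 0$ in $\mathcal{D}'(\Omega)$, contradicting (\ref{lincurvature}). To this end I first cover $\Omega$ by squares of side $\sim h$ and apply the Friesecke--James--M\"uller rigidity theorem on each rescaled cube; on such a cube $a^h$ differs from a constant by $O(h^{\omega_1})$ by $(\spadesuit)$. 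Gluing and mollifying as in \cite{FJMhier} yields $R^h\in W^{1,2}(\Omega,SO(3))$ with
\begin{equation*}
\frac{1}{h}\int_{\Omega^h}|\nabla u^h - R^h(x')\, a^h(x)|^2\,\mathrm{d}x \leq Ch^\beta, \qquad \int_\Omega|\nabla_{\mathrm{tan}}R^h|^2\,\mathrm{d}x' \leq Ch^{\beta-2}.
\end{equation*}
Since $\beta>\beta_0\geq 2$, after left-composing $u^h$ with a fixed rotation I may assume $R^h\to\mathrm{Id}$ in $L^2$, with $\|R^h-\mathrm{Id}\|_{L^2}^2\leq Ch^{\beta-2}$ by Poincar\'e.

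Now set $\bar u^h(x')=\fint_{-h/2}^{h/2}u^h(x',t)\,\mathrm{d}t$, and let $a^h_{\mathrm{tan}}$ denote the first two columns of $a^h$. Jensen applied to the rigidity bound gives $\|\nabla_{\mathrm{tan}}\bar u^h - R^h\fint a^h_{\mathrm{tan}}\,\mathrm{d}t\|_{L^2(\Omega)}\leq Ch^{\beta/2}$. Multiplying by $(R^h)^T$ on the left, subtracting $[e_1|e_2]$, dividing by $h^{\omega_0}$, and invoking the defining limit of $\epsilon_g$, one gets
\begin{equation*}
\frac{1}{h^{\omega_0}}\Big((R^h)^T\nabla_{\mathrm{tan}}\bar u^h - [e_1|e_2]\Big) \longrightarrow \epsilon_g[e_1|e_2] \quad\text{in } L^2(\Omega,\mathbb{R}^{3\times 2}),
\end{equation*}
provided $\beta>2\omega_0$; controlling the passage from pointwise values of $a^h$ to its thickness average uses $\omega_0<\omega_1+1$, and absorbing the oscillation of $a^h$ within each rigidity cube uses $\omega_0<2\omega_1$.

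The conclusion proceeds by expanding $(R^h)^T=\mathrm{Id}-A^h+\tfrac12(A^h)^2+\cdots$ with $A^h$ skew-symmetric, $\|A^h\|_{L^2}^2\leq Ch^{\beta-2}$ and $|A^h|\leq C$ pointwise. The zeroth-order piece $\nabla_{\mathrm{tan}}\bar u^h-[e_1|e_2]=\nabla_{\mathrm{tan}}(\bar u^h-(x_1,x_2,0)^T)$ is an in-plane gradient, whose symmetric $2\times 2$ top block has vanishing distributional $\mathrm{curl}^T\mathrm{curl}$. The linear correction $-A^h[e_1|e_2]$ has a skew-symmetric $2\times 2$ top block, so contributes nothing to $\mathrm{sym}(\cdot)_{2\times 2}$. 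The quadratic correction, after division by $h^{\omega_0}$, has $L^1$-size $h^{\beta-2-\omega_0}$, which $\to 0$ precisely when $\beta>\omega_0+2$. Passing to the limit in $\mathcal{D}'(\Omega)$ produces $\mathrm{curl}^T\mathrm{curl}\,\mathrm{sym}(\epsilon_g)_{2\times 2}\equiv 0$, the desired contradiction.

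The main obstacle is the quantitative bookkeeping of three intertwined scales $h^\beta$, $h^{\omega_0}$, $h^{\omega_1}$: every error term must be $o(h^{\omega_0})$ in the relevant norm ($L^2$ for the zeroth- and first-order pieces, $L^1$ for the quadratic one) before one can divide by $h^{\omega_0}$ and pass to the limit. The hypothesis $\omega_0<\min\{2\omega_1,\omega_1+1\}$ absorbs the oscillation of $a^h$ in the rigidity step, while the thresholds $\beta>2\omega_0$ (elastic linear error) and $\beta>\omega_0+2$ (quadratic rotation error) combine into exactly $\beta_0=\max\{2\omega_0,\omega_0+2\}$.
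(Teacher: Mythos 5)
Your plan follows the same strategy as the paper's proof: argue by contradiction, invoke the approximation result (which is exactly Theorem~\ref{thapprox}, proved by covering $\Omega$ with $h$-squares, applying FJM rigidity, and mollifying), normalize by a fixed rotation, and extract that $(\mathrm{sym}\,\epsilon_g)_{2\times 2}$ is a limit of symmetric gradients, so its $\mathrm{curl}^T\mathrm{curl}$ vanishes. The bookkeeping of the exponents $\beta>2\omega_0$, $\beta>\omega_0+2$, $\omega_0<\omega_1+1$, $\omega_0<2\omega_1$ lands on the same four thresholds that the paper uses, and in the same places.

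The one implementation difference worth flagging is how the ``rotation error'' is controlled. You expand $(R^h)^T=\mathrm{Id}-A^h+\tfrac12(A^h)^2+\cdots$ and argue the quadratic tail is $O(h^{\beta-2})$ in $L^1$. As written this has two soft spots. First, $A^h$ is small only in $L^2$, not pointwise, so the remainder ``$\cdots$'' in a genuine Taylor expansion of the exponential map is not uniformly small; the paper sidesteps this entirely by using the \emph{exact} identity $(R-\mathrm{Id})^T(R-\mathrm{Id})=-2\,\mathrm{sym}(R-\mathrm{Id})$ for $R\in SO(3)$, so that the symmetric part of $R^h-\mathrm{Id}$ is exactly a quadratic expression, no truncated series involved. (The paper then controls it in $L^2$ via the planar embedding $W^{1,2}\hookrightarrow L^4$; your weaker $L^1$ control $\|\mathrm{sym}(R^h-\mathrm{Id})\|_{L^1}\leq\tfrac12\|R^h-\mathrm{Id}\|_{L^2}^2$ also suffices for a distributional argument and is a touch more elementary.) Second, when you discard the ``linear correction'' $-A^h[e_1|e_2]$ on the grounds that its $2\times2$ top block is skew, you still have the cross term $-A^h\big(\nabla_{\mathrm{tan}}\bar u^h-[e_1|e_2]\big)$, since the correct decomposition multiplies $(R^h)^T-\mathrm{Id}$ by $\nabla_{\mathrm{tan}}\bar u^h$, not just by $[e_1|e_2]$. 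That cross term does vanish in $L^1$ after dividing by $h^{\omega_0}$ --- bounding it by $\|A^h\|_{L^2}\|\nabla_{\mathrm{tan}}\bar u^h-[e_1|e_2]\|_{L^2}\lesssim h^{(\beta-2)/2}(h^{(\beta-2)/2}+h^{\omega_0})$ --- but you need to write this out; as stated the term is silently dropped. With those two repairs, your argument matches the paper's. The paper's own version packages the same information into the auxiliary fields $V^h$ and $A^h$ and closes in $L^2$ (so that $(\mathrm{sym}\,\epsilon_g)_{2\times2}$ is an $L^2$-weak limit of symmetric gradients), which keeps all the error terms at one place and avoids the cross-term splitting you would otherwise have to track.
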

Above, we used the following notational convention which will be employed
throughout the paper. For a matrix $F$, its $n\times m$ principle 
minor is denoted by $F_{n\times m}$. The superscript $^T$ refers to the 
transpose of a matrix or an operator. The operator $\mbox{curl}^T\mbox{curl}$ 
acts on $2\times 2$ square matrix fields $F$
by taking first $\mbox{curl}$ of each row (returning $2$ scalars)
and then taking $\mbox{curl}$ of the resulting $2$d vector, so that:
$\mbox{curl}^T\mbox{curl} F = \partial_{11}^2 F_{22} - \partial_{12}^2(F_{12}+F_{21})
+ \partial_{22}^2 F_{11}.$
The symmetric part of a square matrix $F$ is denoted by $\mbox{sym } F
= 1/2(F + F^T)$. In particular, we readily see that:
$\mbox{curl}^T\mbox{curl } F = \mbox{curl}^T\mbox{curl} (\mbox{sym }F)$. Physically this condition corresponds to the fact that the
growth strain $\epsilon_g$ is incompatible, i.e. it is not uniquely integrable and thus is not derivable from  an elastic deformation gradient.

\medskip

{\bf 5. Main results: Compactness.}  
By a compactness result we mean identification of the limit behavior 
of the minimizing sequence to $I^h_W$. More generally, this analysis can also 
be done for any sequence of deformations $u^h\in W^{1,2}(\Omega^h,\mathbb R^3)$ 
whose energy $I^h_W(u^h)$ scale like $h^{\beta_0}$. 
In the scaling regimes considered in this paper this compactness result 
has the following form: first, modulo rigid motions 
the deformations $u^h$ converge, up to a subsequence and in a suitable space, 
to the identity map on $\Omega$. 
Second, the suitably re-scaled displacement fields converge to elements of 
certain classes of Sobolev infinitesimal isometries. 

Note that no assumptions will be made on the special form of 
the deformations $u^h$. From this point of view our analysis is {\it Ansatz-free} 
and the limiting behavior of minimizers is rigorously shown to 
depend only on the choice of the sequence $a^h$.    

We present the compactness result (Theorem \ref{compactness} below)  
assuming the special form of the growth 
tensor (\ref{ahform}) which corresonds to the von K\'arm\'an model where 
$\beta_0 =4$. In this context, the out-of-plane displacement $v$ will 
be rescaled by the thickness $h$, and the in-plane displacement 
$w$ by $h^2$. Again, these scalings are naturally 
imposed by the original choice of the growth tensor $a^h$.

\begin{theorem}\label{compactness} 
Given two smooth matrix fields
$\epsilon_g,\kappa_g:\overline\Omega\longrightarrow \mathbb{R}^{3\times 3}$, 
define the growth tensors as:
\begin{equation}\label{ahform}
a^h(x', x_3)=\mathrm{Id} + h^2\epsilon_g(x') + hx_3\kappa_g(x').
\end{equation}
Assume that the energies of a sequence of deformations 
$u^h\in W^{1,2}(\Omega^h,\mathbb{R}^3$) satisfy: 
\begin{equation} \label{boundh4} 
I^h_W(u^h) \leq Ch^4,
\end{equation} 
where $W$ fulfills $(\clubsuit)$. Then there exist 
proper rotations $\bar R^h\in SO(3)$ and translations 
$c^h\in\mathbb{R}^3$ such that for the normalized deformations:
$$y^h(x',x_3) = (\bar R^h)^T u^h(x',hx_3) - c^h:\Omega^1\longrightarrow\mathbb{R}^3$$
the following holds.
\begin{itemize}
\item[(i)] $y^h(x',x_3)$ converge in $W^{1,2}(\Omega^1,\mathbb{R}^3)$ to $x'$.
\item[(ii)] The scaled displacements:
\begin{equation}\label{Vh}
V^h(x')=\frac{1}{h}\fint_{-1/2}^{1/2}y^h(x',t) - x'~\mathrm{d}t
\end{equation}
converge (up to a subsequence) in $W^{1,2}(\Omega,\mathbb{R}^3)$ to the vector field
of the form $(0,0,v)^T$, with the only non-zero out-of-plane
scalar component: $v\in W^{2,2}(\Omega,\mathbb{R})$.
\item[(iii)] The scaled in-plane displacements $h^{-1} V^h_{tan}$ converge
(up to a subsequence) weakly in $W^{1,2}(\Omega, \mathbb{R}^2)$ 
to an in-plane displacement field $w\in W^{1,2}(\Omega,\mathbb{R}^2)$.
\end{itemize} 
\end{theorem}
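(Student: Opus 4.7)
My strategy is to adapt the Friesecke-James-Müller rigidity argument to the residually strained setting. Change variables via $y^h(x',x_3) := u^h(x', hx_3)$ on $\Omega^1$, and denote the scaled gradient $\nabla_h y^h := (\partial_1 y^h, \partial_2 y^h, h^{-1}\partial_3 y^h)$. Since $a^h = \mathrm{Id} + O(h^2)$ uniformly on $\Omega^h$ (the $hx_3\kappa_g$ term being $O(h^2)$ because $|x_3| \leq h/2$), the bound $I^h_W(u^h) \leq Ch^4$ combined with the nondegeneracy $(\clubsuit\mbox{ii})$ yields, via a direct comparison of $\mbox{dist}(F(a^h)^{-1}, SO(3))$ with $\mbox{dist}(F, SO(3))$ and the a priori $L^2$-boundedness of $\nabla_h y^h$,
\begin{equation*}
\int_{\Omega^1} \mbox{dist}^2(\nabla_h y^h, SO(3))~\mbox{d}z \leq Ch^4.
\end{equation*}

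I would then apply the refined geometric rigidity for plates from \cite{FJMhier} to produce a rotation field $R^h \in W^{1,2}(\Omega, SO(3))$ and a constant $\bar R^h \in SO(3)$ satisfying
\begin{equation*}
\|\nabla_h y^h - R^h\|_{L^2(\Omega^1)}^2 \leq Ch^4, \qquad \|\nabla R^h\|_{L^2(\Omega)}^2 \leq Ch^2, \qquad \|R^h - \bar R^h\|_{L^2(\Omega)}^2 \leq Ch^2.
\end{equation*}
Writing $Q^h := (\bar R^h)^T R^h = \mathrm{Id} + hB^h$, the field $B^h$ is bounded in $W^{1,2}(\Omega)$; by the 2D Sobolev embedding it is strongly precompact in every $L^p(\Omega)$, $p<\infty$, and the identity $(Q^h)^T Q^h = \mathrm{Id}$ yields $\mbox{sym}(B^h) = -(h/2)(B^h)^T B^h$, which is $O(h)$ in each such $L^p$. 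Hence along a subsequence $B^h \to B$ with $B$ skew-symmetric.

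Set $\bar y^h := (\bar R^h)^T y^h - c^h$ for a translation $c^h \in \mathbb{R}^3$ chosen to normalize $\fint \bar y^h$. Then $\nabla_h \bar y^h = \mathrm{Id} + hB^h + G^h$ with $\|G^h\|_{L^2(\Omega^1)} = O(h^2)$, so $\nabla_h \bar y^h \to \mathrm{Id}$ strongly in $L^2$ and Poincaré gives (i). For (ii), the computation
\begin{equation*}
\partial_\alpha V^h_3 = \frac{1}{h}\fint_{-1/2}^{1/2} \partial_\alpha \bar y^h_3~\mbox{d}x_3 = B^h_{3\alpha} + O(h)~\mbox{ in } L^2(\Omega)
\end{equation*}
(using that $B^h$ depends only on $x'$) yields $V^h_3 \to v$ in $W^{1,2}(\Omega)$ with $\nabla v = (B_{31},B_{32})$; since $B \in W^{1,2}(\Omega)$, also $v \in W^{2,2}(\Omega)$. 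Analogously $\partial_\beta V^h_\alpha \to B_{\alpha\beta}$, and because $B$ is skew the $W^{1,2}$-weak limit of $V^h_{tan}$ is an infinitesimal planar Euclidean motion; I absorb it by perturbing $\bar R^h$ through an $O(h)$ rotation in the $(e_1,e_2)$-plane and correspondingly adjusting $c^h$, so that $V^h_{tan} \to 0$, completing (ii). For (iii), the same identity gives $\mbox{sym}(\nabla V^h_{tan}) = \mbox{sym}(B^h)_{2\times 2} + O(h) = O(h)$ in $L^2(\Omega)$; hence $\mbox{sym}(\nabla(h^{-1}V^h_{tan}))$ is bounded in $L^2$, and with the previous normalization killing the infinitesimal rotation, the 2D Korn inequality delivers the boundedness of $h^{-1}V^h_{tan}$ in $W^{1,2}(\Omega,\mathbb{R}^2)$; weak compactness then provides the desired $w$.

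The principal obstacle is the simultaneous refined normalization of $\bar R^h$ and $c^h$: the constant rotation $\bar R^h$ must absorb both the mean of $R^h$ (so that $B^h$ remains bounded) and the infinitesimal planar rotation appearing in the limit of $B^h$ (so that $V^h_{tan}$ vanishes at first order), while $c^h$ handles the corresponding translations. It is precisely this careful choice that drops the symmetric part of $\nabla V^h_{tan}$ one order below its skew part and enables the Korn-based argument underlying (iii).
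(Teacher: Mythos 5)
Your proposal is essentially correct and follows the same overall strategy as the paper (rigidity estimate $\Rightarrow$ rotation field $R^h$ $\Rightarrow$ scaled fields $B^h$ bounded in $W^{1,2}$ with skew-symmetric limit $\Rightarrow$ Korn). The main technical difference is at the very start: you absorb the $a^h = \mathrm{Id} + O(h^2)$ factor immediately and invoke the classical Friesecke--James--M\"uller refined rigidity directly on $\nabla_h y^h$, whereas the paper first proves a non-Euclidean variant of the rigidity estimate (Theorem \ref{thapprox}) and keeps the approximation in the form $\nabla u^h \approx R^h a^h$. Your shortcut is legitimate precisely because for the von K\'arm\'an growth tensor (\ref{ahform}), $\|a^h-\mathrm{Id}\|_{L^\infty(\Omega^h)}=O(h^2)$ is of the same order as the rigidity defect, so the $a^h$ factor can be folded in; note, however, that the paper needs the non-Euclidean version for the more general scaling result of Theorem \ref{contra_general}, so it is not redundant machinery in the larger argument.

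The one place that must be made precise is the normalization of $\bar R^h$. You say you perturb $\bar R^h$ by an $O(h)$ in-plane rotation ``so that $V^h_{tan}\to 0$,'' but for the Korn estimate in (iii) you need the \emph{exact} per-$h$ constraint $\mathrm{skew}\int_\Omega \nabla V^h_{tan}=0$ (together with $\int V^h=0$): only then does Korn with constraint yield $\|h^{-1}V^h_{tan}\|_{W^{1,2}}\le C\|\mathrm{sym}\,\nabla(h^{-1}V^h_{tan})\|_{L^2}$. Merely arranging that the limiting infinitesimal rotation vanishes gives $\mathrm{skew}\int \nabla V^h_{tan}=o(1)$, which does not suffice to control the rigid-motion part of $h^{-1}V^h_{tan}$. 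The paper achieves the exact constraint by the specific choice $\bar R^h=\mathbb{P}_{SO(3)}\fint_{\Omega^h}\nabla u^h$ (built in two stages, (\ref{m00})--(\ref{m02})), which forces $(\bar R^h)^T\fint_{\Omega^h}\nabla u^h$ to be symmetric and hence $\mathrm{skew}\int_\Omega\nabla V^h=0$ identically. You could equivalently make your ``perturbation'' exact by an intermediate-value argument, or simply adopt the polar-decomposition normalization from the outset; either way, state and use the exact condition rather than the weaker asymptotic one.
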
 

\medskip

{\bf 6. Main results: $\Gamma$-convergence.} 
Heuristically, a sequence of functionals $F_n$ is said to $\Gamma$ converge 
to a limit functional $F$ if the the minimizers of $F_n$, if converging, 
have a minimizer of $F$ as a limit. 
More precisely, any $\Gamma$-convergence result involves a
careful comparison of the values of the energies $F_n$ on seqeunces 
of deformations and the value of $F$ on the limit deformation. 
Hence, it combines a lower and an upper bound estimate (which are called 
the $\Gamma$-liminf and the $\Gamma$-limsup inequalities). 
 
For the von K\'arm\'an growth tensor studied in this article, these estimates 
are established for the sequence $ 1 /h^4 I^h_W(u^h)$ and the limit energy 
value ${\mathcal I}_g(w,v)$ given in (\ref{vonKarman}) below.  
The liminf inequality (Theorem \ref{liminf}) involves a lower bound 
on the energy of any sequence of deformations $u^h$. 
The limsup part (Theorem \ref{thmaindue}) establishes that for any pair 
of displacements $(w,v)$ in suitable limit spaces, one can construct a sequence 
(\ref{recoveryseq}) of $3$d deformations of thin plates $\Omega^h$   
which approximately yield the energy $\mathcal{I}_g(w,v)$.  
The form of such {\it recovery sequence} delivers an insight on how to reconstruct 
the $3$d deformations out of the data on the mid-plate $\Omega$. 
In particular, comparing the present von K\'arm\'an growth 
model (\ref{recoveryseq}) with the classical model (\cite{FJMhier}, Section 6.1) 
we observe the novel warping effect in the non-tangential growth (see (\ref{d01})). 

\begin{theorem}\label{liminf} 
Assume (\ref{ahform}) and $(\clubsuit)$. Let the bound (\ref{boundh4})
be satisfied by a sequence $u^h\in W^{1,2}(\Omega^h,\mathbb{R}^3)$ so that 
the convergences (i), (ii) and (iii) of Theorem \ref{compactness} hold true. 
Then there holds:
$$\liminf_{h\to 0} \frac{1}{h^4} I_W^h(u^h) \geq \mathcal{I}_g(w,v),$$
where:
\begin{equation}\label{vonKarman}
\begin{split}
\mathcal{I}_g(w,v)= 
\frac{1}{2} 
\int_\Omega \mathcal{Q}_2&\left(\mathrm{sym }\nabla w 
+\frac{1}{2}\nabla v\otimes \nabla v
- (\mathrm{sym}~ \epsilon_g)_{2\times 2}\right)\\
&\qquad\qquad\qquad
 + \frac{1}{24} \int_\Omega \mathcal{Q}_2\Big(\nabla^2 v 
+ (\mathrm{sym}~ \kappa_g)_{2\times 2}\Big),
\end{split}
\end{equation}
and the quadratic nondegenerate form $\mathcal{Q}_2$, acting on matrices 
$F\in\mathbb{R}^{2\times 2}$ is:
\begin{equation}\label{defQ}
\mathcal{Q}_2(F) =\min\{\mathcal{Q}_3(\tilde F); ~\tilde F\in\mathbb{R}^{3\times 3},
\tilde F_{2\times 2}= F\} \mbox{ and } \mathcal{Q}_3(\tilde F) =
D^2W(\mathrm{Id})(\tilde F\otimes \tilde F).
\end{equation}
\end{theorem}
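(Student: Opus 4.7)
The plan is to follow the Friesecke--James--M\"uller strategy for the $\Gamma$-liminf in nonlinear plate theories, adapted to incorporate the growth tensor. After the change of variables $x_3\mapsto hx_3$ and frame-indifference, the rescaled energy on the unit-thickness cylinder reads
$$I_W^h(u^h) = \int_{\Omega^1} W\bigl(\nabla_h y^h\,(A^h)^{-1}\bigr)\,\mathrm{d}x,$$
where $\nabla_h y = (\partial_1 y\,|\,\partial_2 y\,|\,h^{-1}\partial_3 y)$ and $A^h(x',x_3) = \mathrm{Id} + h^2\bigl(\epsilon_g(x')+x_3\,\kappa_g(x')\bigr)$. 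Geometric rigidity applied to cylindrical subdomains, together with the energy bound (\ref{boundh4}) and the nondegeneracy of $W$, yields a smoothed field $R^h:\Omega\to SO(3)$ with $\|\nabla_h y^h-R^h\|_{L^2(\Omega^1)}\le Ch^2$ and uniform $W^{1,2}$-bounds. Under the normalization in Theorem \ref{compactness} one then shows $R^h\to \mathrm{Id}$ strongly, while the skew part of $h^{-1}(R^h-\mathrm{Id})$ converges to the infinitesimal rotation encoding $\pm\partial_\alpha v$.

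Next I would introduce the rescaled elastic strain
$$G^h(x) = \frac{1}{h^2}\Bigl((R^h)^T\,\nabla_h y^h\,(A^h)^{-1}-\mathrm{Id}\Bigr),$$
bounded in $L^2(\Omega^1,\mathbb{R}^{3\times 3})$, and pass to a weak subsequential limit $G$. A direct expansion via $(A^h)^{-1} = \mathrm{Id}-h^2(\epsilon_g+x_3\kappa_g)+O(h^4)$, combined with the convergences $V^h\to(0,0,v)^T$ in $W^{1,2}$ and $h^{-1}V^h_{\mathrm{tan}}\rightharpoonup w$ in $W^{1,2}$ from Theorem \ref{compactness} (plus the usual mean-value identities relating $\nabla_h y^h$ to the averages $V^h$), identifies the symmetric $2\times 2$ minor of the limit as
$$(\mathrm{sym}\,G)_{2\times 2}(x',x_3)= \bigl[\mathrm{sym}\,\nabla w+\tfrac{1}{2}\nabla v\otimes\nabla v-(\mathrm{sym}\,\epsilon_g)_{2\times 2}\bigr]-x_3\bigl[\nabla^2 v+(\mathrm{sym}\,\kappa_g)_{2\times 2}\bigr].$$
A Taylor expansion of $W$ around $\mathrm{Id}$, legitimized by a good-set truncation (discarding the vanishing-measure set where $h^2|G^h|\ge 1$, on which the quadratic lower bound of $W$ suffices for control), together with frame-indifference to absorb $R^h$, then gives
$$\liminf_{h\to 0}\frac{1}{h^4}I_W^h(u^h)\ge \frac{1}{2}\int_{\Omega^1}\mathcal{Q}_3(G)\,\mathrm{d}x$$
by weak $L^2$ lower semicontinuity of the positive semi-definite form $\mathcal{Q}_3$.

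Finally, since the third column of $G$ is not constrained by the compactness theorem, pointwise minimization over those free entries replaces $\mathcal{Q}_3(G)$ by $\mathcal{Q}_2\bigl((\mathrm{sym}\,G)_{2\times 2}\bigr)$, and integration in $x_3\in(-1/2,1/2)$ decouples the membrane contribution (constant in $x_3$) from the bending contribution (linear in $x_3$, contributing $\int_{-1/2}^{1/2}x_3^2\,\mathrm{d}x_3=1/12$), producing the prefactors $1/2$ and $1/24$ in (\ref{vonKarman}). I expect the principal technical obstacle to be the combination of good-set truncation with weak limit identification: one must show that the Taylor remainder of $W$ is negligible in $L^1$ on the good set, and that the $(\alpha,3)$-entries of $G^h$---only weakly controlled in $L^2$---enter the liminf exactly through the minimization defining $\mathcal{Q}_2$. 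These steps follow the classical FJM template, but require careful use of the $W^{1,2}$-regularity of $R^h$ and the precise $h^2$ scaling of $a^h-\mathrm{Id}$ built into (\ref{ahform}).
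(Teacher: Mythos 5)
Your proposal follows essentially the same Friesecke--James--M\"uller template as the paper's proof: the same rescaled strain $G^h=\frac{1}{h^2}\bigl((R^h)^T\nabla u^h(a^h)^{-1}-\mathrm{Id}\bigr)$, the same identification (via the convergences from Theorem \ref{compactness} and a difference-quotient argument in $x_3$) of the weak limit $(\mathrm{sym}\,G)_{2\times 2}$ as an affine function of $x_3$ with the membrane and bending terms, the same good-set truncation plus Taylor expansion of $W$, and the same reduction from $\mathcal{Q}_3$ to $\mathcal{Q}_2$ followed by the $x_3$-integration producing the prefactors $1/2$ and $1/24$. One small technical slip: the good set should be $\{h|G^h|\le 1\}$ (as in the paper) rather than $\{h^2|G^h|<1\}$, since the Taylor-remainder estimate requires $h^2 G^h\to 0$ uniformly on the good set, whereas your threshold only gives boundedness; apart from this, the argument is the paper's.
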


\begin{theorem}\label{thmaindue}
Assume (\ref{ahform}) and $(\clubsuit)$.
Then, for every $w\in W^{1,2}(\Omega,\mathbb{R}^3)$ and every 
$v\in W^{2,2}(\Omega,\mathbb{R})$, there exists a sequence of deformations 
$u^h\in W^{1,2}(S^{h},\mathbb{R}^3)$ such that the following holds:
\begin{itemize}
\item[(i)] The sequence 
$y^h(x',x_3) = u^h(x',hx_3)$ converge in $W^{1,2}(\Omega^1,\mathbb{R}^3)$ to $x'$.
\item[(ii)] $\displaystyle V^h(x') = h^{-1}\fint_{-h/2}^{h/2}(u^h(x',t) 
- x')~\mathrm{d}t$ 
converge in $W^{1,2}(\Omega,\mathbb{R}^3)$ to $(0,0,v)^T$.
\item[(iii)] $h^{-1} V^h_{tan}$ converge in $W^{1,2}(\Omega,\mathbb{R}^{2})$ to $w$.
\item[(iv)] Recalling the definition (\ref{vonKarman}) one has:
$$\lim_{h\to 0} \frac{1}{h^4} I_W^h(u^h) = \mathcal{I}_g(w,v).$$
\end{itemize}
\end{theorem}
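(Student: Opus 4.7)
The plan is to follow the standard $\Gamma$-limsup recipe: reduce to smooth data by density, exhibit an explicit two-scale ansatz with a warping correction, and perform a Taylor expansion of $W$ near $SO(3)$ that, combined with frame indifference, collapses the three-dimensional quadratic form $\mathcal{Q}_3$ to the relaxed two-dimensional form $\mathcal{Q}_2$. By the continuity of $\mathcal{I}_g$ on $W^{1,2}\times W^{2,2}$ and a diagonal argument, it suffices to construct the recovery sequence for smooth data $w\in C^\infty(\overline{\Omega},\mathbb{R}^2)$, $v\in C^\infty(\overline{\Omega},\mathbb{R})$. For such data I would take the ansatz
\begin{equation*}
u^h(x',x_3) = \begin{pmatrix} x' \\ 0 \end{pmatrix} + h\begin{pmatrix} 0 \\ 0 \\ v(x') \end{pmatrix} + h^2\begin{pmatrix} w(x') \\ 0 \end{pmatrix} + x_3\vec b^{\,h}(x') + \frac{x_3^2}{2}\vec d^{\,h}(x'),
\end{equation*}
with $\vec b^{\,h}(x') = e_3 - h(\nabla v(x'),0)^T + h^2\vec p(x')$ and $\vec d^{\,h}(x') = h\vec q(x')$; here $\vec p,\vec q:\Omega\to\mathbb{R}^3$ are warping fields whose out-of-plane components will be tuned below.

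Writing $(a^h)^{-1} = \mathrm{Id} - h^2\epsilon_g - hx_3\kappa_g + O(h^3)$ and expanding $\nabla u^h (a^h)^{-1}$, I would group terms as $\mathrm{Id} + hA(x') + h^2 R_0(x') + hx_3 R_1(x') + o(h^2)$ uniformly on $\Omega^h$, where $A$ is the skew matrix with $Ae_\alpha = \partial_\alpha v\, e_3$ (for $\alpha=1,2$). Multiplying by the rotation $e^{-hA}$ on the left --- which leaves $W$ invariant by frame indifference --- the antisymmetric order-$h$ contribution is removed, and a short computation using the identity $-\tfrac{1}{2}(A^2)_{2\times 2} = \tfrac{1}{2}\nabla v\otimes\nabla v$ shows that the $2\times 2$ symmetric block of the remaining residual is
\begin{equation*}
h^2\bigl(\mathrm{sym}\,\nabla w + \tfrac{1}{2}\nabla v\otimes\nabla v - (\mathrm{sym}\,\epsilon_g)_{2\times 2}\bigr) + hx_3\bigl(\nabla^2 v + (\mathrm{sym}\,\kappa_g)_{2\times 2}\bigr),
\end{equation*}
while the third row and column still depend linearly on $\vec p$ and $\vec q$. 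I would then choose $\vec p,\vec q$ so that, pointwise in $(x',x_3)$, the full symmetric $3\times 3$ residual realizes the minimum in the definition $\mathcal{Q}_2(F) = \min\{\mathcal{Q}_3(\tilde F);\ \tilde F_{2\times 2}=F\}$ applied to the $2\times 2$ block above. The genuine $x_3$-dependence coming from $\kappa_g$ forces $\vec q$ to carry a non-trivial $e_3$-component, which is the novel warping effect referenced in (\ref{d01}).

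With these choices, the $C^2$-regularity of $W$ near $SO(3)$ together with the $L^\infty$-boundedness of the residual on smooth data give the expansion
\begin{equation*}
W\bigl(\nabla u^h (a^h)^{-1}\bigr) = \tfrac{1}{2}\mathcal{Q}_3\bigl(h^2 R_0 + hx_3 R_1\bigr) + o(h^4)
\end{equation*}
uniformly on $\Omega^h$. Dividing by $h^4$, integrating, and using $\fint_{-h/2}^{h/2} x_3\,\mathrm{d}x_3 = 0$ and $\fint_{-h/2}^{h/2} x_3^2\,\mathrm{d}x_3 = h^2/12$ decouples the stretching and bending contributions: the mixed term $hx_3\cdot h^2$ drops out and the pure bending term picks up the factor $1/12$, which combines with the $1/2$ in front of $\mathcal{Q}_3$ to yield the prefactor $1/24$ in the bending part of $\mathcal{I}_g(w,v)$. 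The convergences (i)-(iii) follow directly from the explicit form of the ansatz.

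The main obstacle is the pointwise optimization step: because $a^h$ depends on $x_3$ through $\kappa_g$, the corrected residual carries both a constant and an $x_3$-linear piece of the same order $h^2$, and the warping fields $\vec p,\vec q$ must simultaneously realize both the stretching and the bending minima defining $\mathcal{Q}_2$. This forces one to track every contribution of order $h^2$ or $hx_3$ in $\nabla u^h (a^h)^{-1}$, which constitutes the novel content relative to the classical ($\kappa_g=0$) von K\'arm\'an derivation. Once this bookkeeping is complete, the final density reduction by mollification of $(w,v)$ and diagonal extraction in $h$ is standard and relies on the continuity of $\mathcal{I}_g$ on $W^{1,2}\times W^{2,2}$.
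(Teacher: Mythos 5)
Your proposal is correct and follows the same route as the paper: an identical two-scale ansatz with the warping fields in the $x_3$- and $x_3^2$-directors, premultiplication by the rotation $e^{-hA}$ to kill the order-$h$ skew part, Taylor expansion of $W$ near $\mathrm{Id}$, $x_3$-integration to decouple stretching from bending (with the $1/24$ factor from $\fint x_3^2 = h^2/12$), pointwise choice of the out-of-plane components of the warping fields to realize the $\mathcal{Q}_2$-minimum, and a final density/diagonal argument for rough $(w,v)$. The only cosmetic remark is that what you flag as the main obstacle --- simultaneously realizing the stretching and bending minima --- is in fact unproblematic: $\vec p$ enters only the $x_3$-independent residual $R_0$ and $\vec q$ only the $x_3$-linear residual $R_1$, and the cross term is odd in $x_3$ and integrates to zero, so the two optimizations decouple exactly; this is precisely how the paper defines its fields $d^0$ and $d^1$.
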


The main consequence of the $\Gamma$-convergence result is the following: 
If $u^h$ is a minimizing sequence for $I^h_W$, and if $w$ and $v$ 
are the respective limiting in-plane and out-of-plane displacements 
corresponding to $u^h$, then $(w,u)$  will be a minimizer of the 
von K\'arm\'an growth functional ${\mathcal I}_g$ (Corollary 
\ref{minsconverge} below).

Another direct corollary is the identification 
of the lower bound of $I^h_W$ for the von K\'arm\'an growth (\ref{ahform}) 
under an appropriate curvature condition. 
Note that the assumptions of Theorem \ref{contra_general} do not hold,
since in the present case $\omega_0 = 2\omega_1 = \omega_1 +1 = 2$. 
However, if we replace (\ref{lincurvature}) by either of the following 
two conditions: 
\begin{equation}\label{CO1}
\mathrm{curl }\big((\mathrm{sym }~\kappa_g)_{2\times 2}\big) \neq 0, 
\end{equation}
or:
\begin{equation}\label{CO2} 
\displaystyle\mathrm{curl}^T\mathrm{curl}~ (\epsilon_g)_{2\times 2} 
+ \mathrm{det}\big((\mathrm{sym }~\kappa_g)_{2\times 2}\big) \neq 0,
\end{equation}
we indeed obtain that $ \inf I^h_W \ge ch^4$ with $c>0$.  
The above conditions guarantee that the highest order terms in 
the expansion of the Riemann curvature tensor components 
$R_{1213}$, $R_{2321}$ and $R_{1212}$ of $g^h=(a^h)^Ta^h$ do not vanish. 
Also, either of them implies that $\inf \mathcal {I}_g >0$ (see Lemma \ref{GCM}), 
which combined with Theorem \ref{liminf} yields the lower bound on $\inf I^h_W$.  

\begin{corollary}\label{minsconverge}
Assume (\ref{ahform}) and $(\clubsuit)$. Then:
\begin{itemize}
\item[(i)] There exist uniform constants $C, c\geq 0 $ such that for every 
$h$ there holds:
\begin{equation}\label{bounds} 
c\le \frac 1{h^4} \inf I^h_W \le C.  
\end{equation}
If moreover (\ref{CO1}) or (\ref{CO2}) hold then one may have $c>0$.
\item[(ii)] There exists at least one minimizing sequence 
$u^h\in W^{1,2}(\Omega^h,\mathbb{R}^3$ for $I^h_W$:
\begin{equation}\label{approxmin}
\lim_{h\to 0} \Big ( \frac 1{h^4} I^h_W (u^h) - \frac 1{h^4} \inf I^h_W \Big ) = 0.  
\end{equation} 
For any such sequence the convergences (i), (ii) and (iii) 
of Theorem \ref{compactness} hold and the limit $(w,v)$ is a minimizer 
of $\mathcal I_g$.  
\item[(iii)] For any minimizer $(w,v)$ of ${\mathcal I}_g$, there exists 
a minimizing sequence $u^h$, satisfying (\ref{approxmin}) together with
(i), (ii), (iii) and (iv) of Theorem \ref{thmaindue}.
\end{itemize}
\end{corollary}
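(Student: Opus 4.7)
The plan is to read Corollary \ref{minsconverge} as the standard consequence of the $\Gamma$-convergence framework already set up: the liminf inequality (Theorem \ref{liminf}), the recovery sequence (Theorem \ref{thmaindue}), compactness (Theorem \ref{compactness}), and the positivity Lemma \ref{GCM}. All three assertions reduce to combining these ingredients in the usual way, so I do not anticipate a real obstacle beyond bookkeeping.

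First I would handle the upper bound in (i). Applying Theorem \ref{thmaindue} to the trivial choice $(w,v)=(0,0)$ (or any fixed admissible pair) produces a sequence $u^h$ with $h^{-4}I^h_W(u^h)\to\mathcal{I}_g(0,0)<\infty$. Since $\inf I^h_W\le I^h_W(u^h)$, this yields $h^{-4}\inf I^h_W\le C$. The lower bound $c\ge 0$ is trivial from $I^h_W\ge 0$; to upgrade to $c>0$ under \eqref{CO1} or \eqref{CO2}, suppose by contradiction that along some subsequence $h^{-4}\inf I^h_W\to 0$, and pick $u^h$ with $I^h_W(u^h)\le 2\inf I^h_W\le Ch^4$. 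Theorem \ref{compactness} delivers limits $w\in W^{1,2}(\Omega,\mathbb{R}^2)$ and $v\in W^{2,2}(\Omega,\mathbb{R})$, and Theorem \ref{liminf} gives $\mathcal{I}_g(w,v)\le\liminf h^{-4}I^h_W(u^h)=0$. But Lemma \ref{GCM}, invoked via \eqref{CO1} or \eqref{CO2}, asserts $\inf\mathcal{I}_g>0$, a contradiction.

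For (ii), existence of a minimizing sequence is immediate from the definition of infimum: choose $u^h$ with $I^h_W(u^h)\le\inf I^h_W+h^5$. Combined with the upper bound from (i), this gives $I^h_W(u^h)\le C'h^4$, so Theorem \ref{compactness} applies and yields the convergences (i)--(iii) with limit $(w,v)$. To see $(w,v)$ minimizes $\mathcal{I}_g$, take any competitor $(\tilde w,\tilde v)$ and let $\tilde u^h$ be the recovery sequence from Theorem \ref{thmaindue}. Then
\begin{equation*}
\mathcal{I}_g(w,v)\le\liminf_{h\to 0}\frac{1}{h^4}I^h_W(u^h)=\liminf_{h\to 0}\frac{1}{h^4}\inf I^h_W\le\limsup_{h\to 0}\frac{1}{h^4}I^h_W(\tilde u^h)=\mathcal{I}_g(\tilde w,\tilde v),
\end{equation*}
where the first inequality is Theorem \ref{liminf}, the middle equality uses $I^h_W(u^h)-\inf I^h_W=o(h^4)$, and the last equality is Theorem \ref{thmaindue}(iv). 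Taking the infimum over $(\tilde w,\tilde v)$ gives $(w,v)\in\arg\min\mathcal{I}_g$; the identity $\lim h^{-4}\inf I^h_W=\min\mathcal{I}_g$ is a byproduct.

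Finally, for (iii), let $(w,v)$ minimize $\mathcal{I}_g$ and let $u^h$ be the recovery sequence provided by Theorem \ref{thmaindue}, which already satisfies (i)--(iv) of that theorem. In particular $h^{-4}I^h_W(u^h)\to\mathcal{I}_g(w,v)=\min\mathcal{I}_g$, and from (ii) we know $h^{-4}\inf I^h_W\to\min\mathcal{I}_g$, so \eqref{approxmin} holds. The main (minor) subtlety worth flagging is ensuring in (ii) that the subsequential limit of \emph{every} approximate minimizing sequence is a minimizer of $\mathcal{I}_g$, not merely some minimizer; this is exactly the chain of inequalities above, and is really the only step where one must be careful not to conflate subsequences.
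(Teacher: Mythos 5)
Your proof is correct and follows exactly the route the paper intends: the Corollary is presented there as the standard consequence of the $\Gamma$-convergence package (Theorems \ref{compactness}, \ref{liminf}, \ref{thmaindue}) together with Lemma \ref{GCM}, and your write-up supplies precisely the missing bookkeeping. Two small remarks. First, in the contradiction argument for $c>0$ you pick $u^h$ with $I^h_W(u^h)\le 2\inf I^h_W$; if $\inf I^h_W$ happened to vanish this would demand an exact minimizer, so it is cleaner to use the choice $I^h_W(u^h)\le \inf I^h_W + h^5$ that you already use in part (ii). Second, Lemma \ref{GCM} literally states that $\mathcal{I}_g(w,v)\neq 0$ for \emph{every} pair, not directly that $\inf\mathcal{I}_g>0$; your argument only needs the former (you produce a specific $(w,v)$ with $\mathcal{I}_g(w,v)=0$ and contradict the lemma), so the phrasing is slightly off but the logic is sound, and in fact the positivity of $\inf\mathcal{I}_g$ then follows \emph{a posteriori} from your part (ii), which shows the infimum is attained.
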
 

\medskip

{\bf 7. Main results: Euler-Lagrange equations for the limit theory.} 
When $W$ is frame invariant as in $(\clubsuit i)$ and isotropic (\ref{isotropic}),
one can see \cite{FJMhier} that the quadratic forms of (\ref{defQ})
are given explicitely as:
\begin{equation}\label{Q23}
\mathcal{Q}_3(F) = 2\mu |\mbox{sym } F|^2 + \lambda|\mbox{tr } F|^2,
\qquad \mathcal{Q}_2(F_{2\times 2}) = 2\mu |\mbox{sym } F_{2\times 2}|^2 + 
\frac{2\mu\lambda}{2\mu+\lambda}|\mbox{tr } F_{2\times 2}|^2,
\end{equation}
for all $F\in \mathbb{R}^{3\times 3}$, where $tr$ stands for the trace 
of a quadratic matrix, and $\mu$ and $\lambda$ are the Lam\'e constants, 
satisfying: $\mu\geq 0$, $3\lambda+\mu\geq 0$. 

We will show that under these conditions, the Euler-Lagrange equations 
(\ref{el1}), (\ref{el4}) of ${\mathcal I}_g$ in (\ref{vonKarman})
are equivalent, under a change of variables which replaces the in-plane 
displacement $w$ by the Airy stress potential $\Phi$, 
to the system of von K\'arm\'an-like equations introduced recently 
by Liang and Mahadevan in \cite{Maha}: 
\begin{equation}\label{EL-2}
\left \{ \begin{split}
\Delta^2\Phi & = -S(K_G + \lambda_g)\\
B\Delta^2v &= [v,\Phi] - B\Omega_g~,
\end{split}\right.
\end{equation} 
under the corresponding boundary conditions (\ref{bc1})  (\ref{b1})  (\ref{b2}),
where: 
\begin{equation*}
\left \{ \begin{split}
S & = \mbox{ Young's modulus } = \frac{\mu(3\lambda + 2\mu)}{\lambda+\mu},
\qquad K_G = \mbox{ Gaussian curvature } = \frac{1}{2}[v,v], \\
B & = \mbox{ bending stiffness } = \frac{S}{12(1-\nu^2)}, \qquad
\nu = \mbox{ Poisson's ratio } = \frac{\lambda}{2(\lambda + \mu)}, \\
\lambda_g & = \mbox{curl}^T\mbox{curl }(\epsilon_g)_{2\times 2}
= \partial_{22}(\epsilon_g)_{11} +  \partial_{11}(\epsilon_g)_{22}
- \partial_{12}\Big((\epsilon_g)_{12}+ (\epsilon_g)_{21}\Big),\\
\Omega_g & = \mbox{div}^T\mbox{div }\Big((\kappa_g)_{2\times 2}
+\nu\mbox{ cof }(\kappa_g)_{2\times 2}\Big) \\
& = \partial_{11} \Big((\kappa_g)_{11}+ \nu(\kappa_g)_{22}\Big) +
\partial_{22}\Big((\kappa_g)_{22}+ \nu(\kappa_g)_{11}\Big) +
(1- \nu)\partial_{12}\Big((\kappa_g)_{12}+ (\kappa_g)_{21}\Big).
\end{split}
\right.
\end{equation*}

\medskip

In brief, we may summarize our results as follows. 
Under the special form of the growth tensor given by (\ref{ahform}), we have: 
\begin{itemize} 
\item[(i)]$m_h=\inf I^h_W$ scales like $h^4$,
\item[(ii)] there exists a sequence of deformations $u^h$ such that 
$I^h_W(u^h) - m_h = o(h^4)$, and for which the rescaled in-plane and out-of-plane 
displacements converge to a limit $(w,v)$, 
\item[(iii)] $(w,v)$ minimizes the von K\'arm\'an growth functional
${\mathcal I}_g$ in (\ref{vonKarman}) and hence satisfies the system 
(\ref{EL-2}) with the corresponding free boundary conditions 
(\ref{bc1})  (\ref{b1})  (\ref{b2}), 
when expressed in terms of the Airy stress potential. 
\end{itemize}
 
\medskip

{\bf 8. Approximating low energy deformations.} 
A crucial step in obtaining the above results is an approximation theorem 
(Theorem \ref{thapprox} below). 
The underlying idea is that we can control the oscillations of 
the deformation gradient $\nabla u^h$ in boxes of diameter proportional 
to the thickness $h$. The geometric rigidty estimate of Friesecke, 
James and M\"uller \cite{FJMgeo} and its generalization 
to our setting 
is the basic tool in this step. 
\begin{theorem}\label{thapprox}
Assume $(\spadesuit)$. Let 
$u^h\in W^{1,2}(\Omega^h,\mathbb{R}^3)$ satisfy:
$$\lim_{h\to 0} \frac{1}{h^2} I_0^h(u^h) = 0.$$ 
Then there exist matrix fields $R^h\in W^{1,2}(\Omega,\mathbb{R}^{3\times 3})$,
such that $R^h(x')\in SO(3)$ for a.a. $x'\in\Omega$ and:
\begin{equation*}
\frac{1}{h}\int_{\Omega^h}|\nabla u^h(x) - R^h(x')a^h(x)|^2~\mathrm{d}x
\leq C \left(I_0^h(u^h) + h^2 Var^2(a^h)\right),
\end{equation*}
\begin{equation*}
\int_\Omega |\nabla R^h|^2 \leq C h^{-2} \left(I_0^h(u^h) + h^2Var^2(a^h)\right),
\end{equation*}
where the constant $C$ is independent of $h$.
\end{theorem}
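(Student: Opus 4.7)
The plan is to adapt to the residually strained setting the ``local rigidity plus patching'' strategy originating in Friesecke--James--M\"uller~\cite{FJMgeo} and already used for thin plates in~\cite{FJMhier,lemopa1}. The guiding observation is that on cubes of lateral diameter comparable to the thickness $h$ the tensor $a^h$ is essentially constant, so that classical geometric rigidity can be applied to the modified gradient $\nabla u^h(\bar a^h_\alpha)^{-1}$, where $\bar a^h_\alpha$ denotes the average of $a^h$ over the cube.

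Concretely, I would partition $\Omega$ into a grid of squares $B_\alpha$ of side of order $h$ (with slightly enlarged overlapping cubes to handle interfaces and boundary effects) and work on the cylinders $C_\alpha = B_\alpha \times (-h/2, h/2)$, which have bounded aspect ratio. On each $C_\alpha$, the classical rigidity estimate of~\cite{FJMgeo}, applied to $\nabla u^h(\bar a^h_\alpha)^{-1}$, produces a constant $R_\alpha \in SO(3)$ with $\int_{C_\alpha} |\nabla u^h(\bar a^h_\alpha)^{-1} - R_\alpha|^2 \le C \int_{C_\alpha} \mathrm{dist}^2(\nabla u^h(\bar a^h_\alpha)^{-1}, SO(3))$, the constant being scale-invariant. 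Using $\|\bar a^h_\alpha - a^h\|_{L^\infty(C_\alpha)} \le C h\, Var(a^h)$ together with $(\spadesuit\mathrm{i})$, and the bound $|\nabla u^h|^2 \le C(\mathrm{dist}^2(\nabla u^h(a^h)^{-1}, SO(3)) + 1)$ to absorb cross terms, I would transfer the inequality back to a bound on $|\nabla u^h - R_\alpha a^h|^2$, at the cost of an additive error of order $h^5 Var^2(a^h)$ per cube. Summing over $\alpha$ and dividing by $h$ yields the first asserted inequality of the theorem.

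For the patching I would estimate the jump between rotations on neighboring cubes through the algebraic identity $(R_\alpha - R_\beta)\bar a^h_\alpha = (R_\alpha \bar a^h_\alpha - \nabla u^h) + (\nabla u^h - R_\beta \bar a^h_\beta) + R_\beta(\bar a^h_\beta - \bar a^h_\alpha)$. Squaring, integrating over $C_\alpha \cup C_\beta$, and applying $|\bar a^h_\alpha - \bar a^h_\beta| \le C h\, Var(a^h)$ gives $|R_\alpha - R_\beta|^2 \le C h^{-3} (\text{local energy on } C_\alpha \cup C_\beta) + C h^2 Var^2(a^h)$. Next I would construct $R^h \in W^{1,2}(\Omega, SO(3))$ by convolving the piecewise constant field $\alpha \mapsto R_\alpha$ with a smooth kernel of scale $h$ and projecting pointwise back onto $SO(3)$; this projection is well-defined provided the mollified matrix stays uniformly within a small neighborhood of $SO(3)$ on which projection is $\mathcal{C}^\infty$, which follows from the smallness of the neighbor jumps under $h^{-2} I_0^h(u^h) \to 0$ combined with $(\spadesuit\mathrm{ii})$. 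Since $|\nabla R^h|$ is pointwise of order $|R_\alpha - R_\beta|/h$ on strips of measure of order $h^2$ around interfaces, I obtain $\int_\Omega |\nabla R^h|^2 \le C \sum_{\alpha \sim \beta} |R_\alpha - R_\beta|^2$, and summing the neighbor bounds produces exactly $C h^{-2}(I_0^h(u^h) + h^2 Var^2(a^h))$.

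The main obstacle is the careful book-keeping of the error terms introduced by (a) freezing $a^h$ to $\bar a^h_\alpha$ on each cube, (b) replacing $\bar a^h_\alpha$ by $a^h$ in the final inequality, and (c) passing from the piecewise constant family $\{R_\alpha\}$ to a single $W^{1,2}$-field by smoothing and projection onto $SO(3)$. Each such step must be shown to contribute at most the remainder $h^2 Var^2(a^h)$ already present on the right-hand side, and relies crucially on hypothesis $(\spadesuit)$ together with the quantitative smallness $I_0^h(u^h) = o(h^2)$.
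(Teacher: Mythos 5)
Your proposal is a correct alternative implementation of the same ``local geometric rigidity plus patching'' strategy, but it assembles the $W^{1,2}$ rotation field $R^h$ differently from the paper. The paper (following Theorem~10 of~\cite{FJMhier}) works with a continuous family of balls $B(x',h)\cap\Omega$ indexed by every $x'\in\Omega$, applies the non-Euclidean rigidity estimate of Lemma~\ref{lemrigidity} on each, and then mollifies the \emph{rescaled gradient} by setting $Q^h(x')=\int_{\Omega^h}\eta_{x'}(z')\,\nabla u^h(z)(a^h(z))^{-1}\,\mathrm{d}z$; the desired field is then $R^h=\mathbb{P}_{SO(3)}Q^h$. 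You instead use a discrete grid of cubes, obtain piecewise-constant rotations $R_\alpha$, estimate neighbor jumps $|R_\alpha-R_\beta|$, and mollify the piecewise-constant \emph{rotation} field before projecting. Both routes yield the two asserted bounds; the paper's continuous mollification sidesteps the explicit jump estimates and the triangle-inequality step needed to transfer the first estimate from the $R_\alpha$'s to $R^h$, which your version must carry out. Two points deserve to be made explicit in your write-up. First, the rigidity estimate of~\cite{FJMgeo} applies to gradient fields, so it cannot be invoked directly for $\nabla u^h(\bar a^h_\alpha)^{-1}$; you must change variables to $v(y)=u^h\bigl((\bar a^h_\alpha)^{-1}y\bigr)$ on the pushed-forward cylinder $\bar a^h_\alpha C_\alpha$ and use the uniformity of the rigidity constant under bilipschitz changes of domain, which is exactly what the paper packages into Lemma~\ref{lemrigidity}. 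Second, the \emph{uniform} pointwise smallness of the jumps $|R_\alpha-R_\beta|$ (needed for the $SO(3)$ projection of the mollified field to be well defined) should be obtained by bounding the local energy on $C_\alpha\cup C_\beta$ by the global quantity $hI^h_0(u^h)$ and invoking $h^{-2}I^h_0(u^h)\to 0$ together with $(\spadesuit)$ --- the same device the paper uses to show $Q^h$ stays near $SO(3)$.
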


\section{Scaling analysis:
a proof of Theorem \ref{contra_general}}

{\bf 1.} Take $\beta>\max\{\omega_0+2, 2\omega_0\}$ and assume, by contradiction, 
that for some sequence $u^h\in W^{1,2}(\Omega^h,\mathbb{R}^3)$ there holds:
\begin{equation}\label{p1}
\frac{1}{h^\beta}I_0^h(u^h) \leq C.
\end{equation}
Since $\beta>2$, in virtue of Theorem \ref{thapprox} there exists 
the rotation-valued matrix fields $R^h\in W^{1,2}(\Omega, SO(3))$ 
approximating appropriately $\nabla u^h (a^h)^{-1}$.
Observe that:
\begin{equation}\label{p2}
\mbox{dist}^2(\fint_\Omega R^h, SO(3))\leq\fint_\Omega|R^h(x) - R^h(x_0)|^2~\mbox{d}x
\leq C\int_\Omega|\nabla R^h|^2 \rightarrow 0 \quad \mbox{as } h\to 0,
\end{equation}
by the second estimate in Theorem \ref{thapprox} and by $(\spadesuit)$.
We may hence, for small $h$, define the averaged rotations $\bar R^h\in SO(3)$ by
$\bar R^h = \mathbb{P}_{SO(3)}\fint_\Omega R^h.$

Define now two fields: $V^h\in W^{1,2}(\Omega,\mathbb{R}^3)$ and 
$A^h\in W^{1,2}(\Omega,\mathbb{R}^{3\times 3})$:
$$V^h(x') = \frac{1}{h^{\omega_0}}\fint_{-h/2}^{h/2}(\bar R^h)^T u^h(x', t) 
- x'~\mbox{d}t,$$
$$A^h(x') = \frac{1}{h^{\omega_0}}
\left((\bar R^h)^T R^h(x')\fint_{-h/2}^{h/2} a^h(x',t)~\mbox{d}t -\mbox{Id}\right).$$
Observe that:
\begin{equation*}\label{p25}
\begin{split}
\|\nabla V^h - A^h_{3\times 2}\|^2_{L^2(\Omega)} &\leq \frac{C}{h^{2\omega_0}}\int_\Omega
\left| \fint_{-h/2}^{h/2}R^h(x')a^h_{3\times 2}(x',t) 
- \nabla_{tan}u^h(x',t)~\mbox{d}t \right|^2
~\mbox{d}x'\\ & \leq \frac{C}{h^{2\omega_0+1}}
\int_{\Omega^h} |\nabla u^h(x) - R^h(x') a^h(x)|^2
~\mbox{d}x \\ & \leq C\left(\frac{1}{h^{2\omega_0}} I_0^h(u^h) + \frac{1}{h^{2\omega_0-2}}
Var^2(a^h)\right) \rightarrow 0 \quad \mbox{as } h\to 0,
\end{split}
\end{equation*}
by Theorem \ref{thapprox} and since $2\omega_0<\beta$ and $2\omega_0 - 2<2\omega_1$.

{\bf 2.} Notice that:
\begin{equation*}
\begin{split}
A^h(x') = & (\bar R^h)^T R^h(x') \epsilon_g(x') 
+ \frac{1}{h^{\omega_0}}\left((\bar R^h)^T R^h(x') - \mbox{Id}\right)\\ &
- (\bar R^h)^T R^h(x')\left(\epsilon_g(x') - \frac{1}{h^{\omega_0}}
\fint_{-h/2}^{h/2} a^h(x',t) - \mbox{Id}~\mbox{d}t\right).
\end{split}
\end{equation*}
Clearly, the third term above converges in $L^2(\Omega)$ to $0$, 
by the definition (\ref{Eg}). The first term converges to
$\epsilon_g$, as by (\ref{p2}) and Theorem \ref{thapprox}:
\begin{equation}\label{p3}
\begin{split}
\int_\Omega& |(\bar R^h)^TR^h -\mbox{Id}|^2 
\leq C \int_\Omega |R^h-\bar R^h|^2 \\
&\leq C\left(
\int_\Omega|R^h-\fint_\Omega R^h|^2 + \mbox{dist}^2(\fint_\Omega R^h, SO(3))\right)
\leq C \int_\Omega |\nabla R^h|^2 \rightarrow 0 \quad \mbox{as } h\to 0.
\end{split}
\end{equation}

To deal with the second term, in the expansion of $A^h$, recall that
$\bar R^h, R^h(x')\in SO(3)$ and so:
\begin{equation*}\label{4.55}
\frac{1}{h^{\omega_0}}\mbox{sym} \left((\bar R^h)^T R^h - \mbox{Id}\right)
=\frac{1}{h^{\omega_0}} \left((\bar R^h)^T R^h - \mbox{Id}\right)^T
\left((\bar R^h)^T R^h - \mbox{Id}\right).
\end{equation*}
Therefore, reasoning as in in (\ref{p3}):
\begin{equation*}\label{4.7}
\begin{split}
\frac{1}{h^{\omega_0}} \left\|\mbox{sym}\left((\bar R^h)^T R^h 
- \mbox{Id}\right)\right\|_{L^2(\Omega)}
&\leq \frac{1}{h^{\omega_0}} \|(\bar R^h)^T R^h - \mbox{Id}\|^2_{L^4(\Omega)}\\
&\leq \frac{C}{h^{\omega_0}} \|R^h - \bar R^h\|^2_{W^{1,2}(\Omega)}
\leq \frac{C}{h^{\omega_0}} \int_\Omega |\nabla R^h|^2 \\
& \leq C\left( \frac{1}{h^{\omega_0+2}} I_0^h(u^h) + \frac{1}{h^{\omega_0}}
Var^2(a^h)\right) \rightarrow 0 \quad \mbox{as } h\to 0,
\end{split}
\end{equation*}
by Theorem \ref{thapprox} and since $\omega_0+2<\beta$ and $\omega_0<2\omega_1$.

{\bf 3.}  Summarizing, the previous step yields:
$$\mbox{sym }\epsilon_g = \lim_{h\to 0} \mbox{sym }A^h \quad \mbox{in } 
L^2(\Omega,\mathbb{R}^{3\times 3}),$$
which by (\ref{p25}) implies:
$$(\mbox{sym }\epsilon_g)_{2\times 2} 
= \lim_{h\to 0} \mbox{sym}\nabla V^h \quad \mbox{in } 
L^2(\Omega,\mathbb{R}^{2\times 2}).$$
Consequently, $(\mbox{sym }\epsilon_g)_{2\times 2} = \mbox{sym}\nabla V$, 
for some $V\in W^{1,2}(\Omega,\mathbb{R}^3)$, and hence there must be: 
$\mbox{curl}^T\mbox{curl }(\epsilon_g)_{2\times 2} = 
\mbox{curl}^T\mbox{curl }(\mbox{sym }\epsilon_g)_{2\times 2}=0$ in $\Omega$.
This brings a contradiction with (\ref{lincurvature}) and hence ends the proof.
\endproof


\section{Lower bound for the Von K\'arm\'an scaling:
a proof of Theorem \ref{compactness} and Theorem \ref{liminf}}

Consider a specific form of the growth tensor:
$$ a^h(x', x_3)=\mbox{Id} + h^\gamma\epsilon_g(x') + h^\theta x_3\kappa_g(x')$$
with exponents $\gamma,\theta >0$ and the smooth perturbation moments:
$\epsilon_g,\kappa_g:\overline\Omega\longrightarrow \mathbb{R}^{3\times 3}$.
One easily sees that $(\spadesuit)$ holds here with:
$$Var(a^h) = h^\gamma\|\nabla\epsilon_g\|_{L^\infty} + h^\theta\|\kappa_g\|_{L^\infty},
\quad \omega_1=\min\{\gamma,\theta\},\quad \omega_0 = \gamma.$$
Also, the result in Theorem \ref{contra_general} holds provided that
$\mbox{curl}^T \mbox{curl }(\mbox{sym }\epsilon_g)_{tan}\not\equiv 0$ and:
\begin{equation}\label{gam}
\gamma<\min\{\theta+1, 2\theta\}\quad \mbox{ and } 
\quad \beta>\max\{\gamma+2,2\gamma\}.
\end{equation}
In what follows, we shall work with exponents $\gamma=2,\theta=1,\beta=4$
which are critical for both inequalities in (\ref{gam}).

\bigskip 

\noindent {\bf Proof of Theorem \ref{compactness}}

\noindent 
{\bf 1.} Let $R^h\in W^{1,2}(\Omega, SO(3))$ be the matrix fields
as in Theorem \ref{thapprox}:
\begin{equation}\label{m1}
\frac{1}{h}\int_{\Omega^h}|\nabla u^h - R^ha^h|^2\leq Ch^4, \qquad 
\int_\Omega|\nabla R^h|^2\leq Ch^2.
\end{equation}
Define the averaged rotations:
$\tilde R^h = \mathbb{P}_{SO(3)}\fint_\Omega R^h.$
These projections of $\fint R^h$ onto $SO(3)$ are well defined for small $h$ 
in virtue of:
$$ \mbox{dist}^2(\fint_\Omega R^h, SO(3))\leq \fint_\Omega |R^h(x) - R^h(x_0)|^2
\leq C\int_\Omega |\nabla R^h|^2\leq Ch^2.$$
Further:
\begin{equation}\label{5.60}
\int_\Omega |R^h - \tilde R^h|^2 \leq C\Big(\int_\Omega |R^h - \fint R^h|^2
+ \mbox{dist}^2(\fint R^h, SO(3))\Big)\leq Ch^2
\end{equation}
Let now: 
\begin{equation}\label{m00}
\hat R^h = \mathbb{P}_{SO(3)}\fint_{\Omega^h} (\tilde R^h)^T\nabla u^h.
\end{equation}
The above projection is well defined for small $h$, because
$\mbox{dist}^2(\fint_{\Omega^h}(\tilde R^h)^T\nabla u^h, SO(3))$ is bounded by:
\begin{equation}\label{5.77}
\begin{split}
|\fint_{\Omega^h} &(\tilde R^h)^T\nabla u^h - \mbox{Id}|^2\leq
C\fint_{\Omega^h}|\nabla u^h - \tilde R^h|^2\\
&\leq C\Big(\fint_{\Omega^h} |\nabla u^h - R^ha^h|^2 + 
\fint_{\Omega^h} |a^h - \mbox{Id}|^2 + \fint_{\Omega^h}|R^h-\tilde R^h|^2\Big)
\leq Ch^2,
\end{split}
\end{equation}
where we used (\ref{m1}) and (\ref{5.60}).
Consequently, we also obtain:
\begin{equation}\label{5.66}
|\hat R^h -\mbox{Id}|^2\leq C|\mbox{skew}\fint_{\Omega^h}(\tilde R^h)^T\nabla u^h|^2
\leq C|\fint_{\Omega^h} (\tilde R^h)^T\nabla u^h - \mbox{Id}|^2\leq Ch^2.
\end{equation}
The first inequality above follows by noticing that for any matrix 
$F$ close to $\mbox{Id}$ there holds: 
$\mathbb{P}_{SO(3)}(\mbox{sym } F) = \mbox{Id}$, and hence:
$|\mathbb{P}_{SO(3)}F -\mbox{Id}|\leq C |F - \mbox{sym }F\leq C |\mbox{skew }F|$.

{\bf 2.} We may now define:
\begin{equation}\label{m02}
\bar R^h = \tilde R^h \hat R^h.
\end{equation}
By (\ref{5.60}),  (\ref{5.66}) and  (\ref{m1}) it follows that:
\begin{equation}\label{m2}
\int_\Omega |R^h - \bar R^h|^2 \leq Ch^2 
\quad\mbox{ and } \quad\lim_{h\to 0} (\bar R^h)^TR^h =\mbox{Id}
\quad \mbox{ in } W^{1,2}(\Omega, \mathbb{R}^{3\times 3}).
\end{equation}
Let $c^h\in\mathbb{R}^3$ be vectors such that for the rescaled 
averaged displacement $V^h$ defined as in 
(\ref{Vh}): $V^h(x') = h^{-1}\fint_{-h/2}^{h/2} (\bar R^h)^T u^h(x',t) 
- c^h - x'~\mbox{d}t$, there holds:
\begin{equation}\label{m22}
\int_\Omega V^h = 0, \qquad \mbox{skew}\int_\Omega\nabla V^h = 0.
\end{equation}
The second statement in (\ref{m22}) follows by noticing that, for a matrix $F$
sufficiently close $SO(3)$, its projection $R=\mathbb{P}_{SO(3)}F$ is coincides
with the unique rotation appearing in the polar decomposition of $F$, that is:
$F=RU$ with $\mbox{skew }U = 0$.
Therefore and in view of (\ref{m00}) and (\ref{m02}) we obtain that 
$(\bar R^h)^T \fint_{\Omega^h} \nabla u^h = 
(\hat R^h)^T \fint_{\Omega^h}(\tilde R^h)^T\nabla u^h$ is symmetric.
Hence:
$\mbox{skew}\fint_\Omega\nabla V^h$ $ = h^{-1}\mbox{skew}\fint_{\Omega^h}
(\bar R^h)^T\nabla u^h $ $= 0$.
In particular, we see as well that (\ref{m02}) coincides
with:
$$\bar R^h = \mathbb{P}_{SO(3)}\fint_{\Omega^h}\nabla u^h. $$
To obtain (i) we use (\ref{m02}), (\ref{5.77}) and (\ref{5.66}):
\begin{equation}\label{5.111}
\begin{split}
\|(\nabla y^h - &\mbox{Id})_{3\times 2}\|^2_{L^2(\Omega^1)} 
\leq \frac{1}{h}\int_{\Omega^h} |(\bar R^h)^T\nabla u^h - \mbox{Id}|^2 \\
&\leq C \left(\frac{1}{h}\int_{\Omega^h}
|(\tilde R^h)^T\nabla u^h - \mbox{Id}|^2 ~\mbox{d}x + |\hat R^h - \mbox{Id}|^2
\right) \leq Ch^2,
\end{split}
\end{equation}
and notice that by (\ref{m1}):
$$\|\partial_3 y^h\|^2_{L^2(\Omega^1)} \leq Ch\int_{\Omega^h}|\nabla u^h|^2
\leq Ch^2,$$
which implies convergence of $y^h$ by means of the Poincar\'e inequality 
$\int_{\Omega^1}y^h(x) - x' = h\int_\Omega V^h=0$ by (\ref{m22}).
Notice also that (\ref{5.111}) implies the weak convergence
(up to a subsequence) in $W^{1,2}(\Omega,\mathbb{R}^3)$ of $V^h$.

{\bf 3.} Consider the matrix fields $A^h\in W^{1,2}(\Omega,\mathbb{R}^{3\times 3})$:
\begin{equation*}
\begin{split}
A^h(x') &= \frac{1}{h}\fint_{-h/2}^{h/2} (\bar R^h)^TR^h(x') a^h(x',t) - \mbox{Id}
~\mbox{d}t\\ &  = h(\bar R^h)^TR^h(x')\epsilon_g(x') 
+ \frac{1}{h}\left((\bar R^h)^TR^h(x') - \mbox{Id}\right).
\end{split}
\end{equation*}
By (\ref{m2}) and (\ref{m1}), clearly: $\|A^h\|_{W^{1,2}(\Omega)}\leq C$ and so, 
up to a subsequence:
\begin{equation}\label{m3}
\begin{split}
\lim_{h\to 0} & A^h = A \quad  \mbox{ and } 
\quad  \lim_{h\to 0} \frac{1}{h} \left((\bar R^h)^TR^h- \mbox{Id}\right) = A \\
& \mbox{ weakly in } W^{1,2}(\Omega, \mathbb{R}^{3\times 3}) 
\mbox{ and (strongly) in } L^q(\Omega, \mathbb{R}^{3\times 3}) \quad \forall q\geq 1.
\end{split}
\end{equation}
Also, using (\ref{m2}) and (\ref{m1}) again: 
\begin{equation*}
\begin{split}
h^{-1}\|\mbox{sym}((\bar R^h)^TR^h &- \mbox{Id})\|_{L^2(\Omega)}
 = 2h^{-1}\|((\bar R^h)^TR^h - \mbox{Id})^T 
((\bar R^h)^TR^h - \mbox{Id})\|_{L^2(\Omega)} \\ & \leq Ch^{-1}
\|(\bar R^h)^TR^h - \mbox{Id}\|^2_{L^4(\Omega)} \leq Ch^{-1} 
\|R^h - \bar R^h\|^2_{W^{1,2}(\Omega)} \leq Ch.
\end{split}
\end{equation*}
Above, we used a straightforward observation that:
\begin{equation}\label{helpma}
(R-\mbox{Id})^T(R-\mbox{Id}) = -2\mbox{sym}(R-\mbox{Id})\qquad\forall R\in SO(3).
\end{equation}
Therefore, the limiting matrix field $A$ has skew values:
\begin{equation}\label{m4}
\mbox{sym } A = \lim_{h\to 0} \mbox{sym } A^h = 0.
\end{equation}
Further, by (\ref{helpma}) we observe that:
$$\frac{1}{h}\mbox{sym } A^h = \mbox{sym} \left((\bar R^h)^TR^h \epsilon_g\right)
- \frac{1}{2}\frac{1}{h^2} \left((\bar R^h)^TR^h(x')- \mbox{Id}\right)^T
\left((\bar R^h)^TR^h(x')- \mbox{Id}\right)$$
Hence, by (\ref{m2}), (\ref{m3}) and (\ref{m4}): 
\begin{equation}\label{m5}
\begin{split}
\lim_{h\to 0} \frac{1}{h}\mbox{sym} A^h =  \mbox{sym } \epsilon_g 
- \frac{1}{2} A^TA = \mbox{sym } \epsilon_g + \frac{1}{2}A^2 \quad 
\mbox{ in } L^q(\Omega, \mathbb{R}^{3\times 3}) \quad \forall q\geq 1.
\end{split}
\end{equation}

{\bf 4.} Regarding convergence of $V^h$, we have:
\begin{equation}\label{m55}
\nabla V^h(x') = A^h_{3\times 2}(x') + \frac{1}{h}(\bar R^h)^T \fint_{-h/2}^{h/2}
R^h(x') a^h_{3\times 2}(x',t) - \nabla_{tan} u^h(x',t)~\mbox{d}t.
\end{equation}
Also:
\begin{equation*}
\begin{split}
\|\nabla V^h - A^h_{3\times 2}\|_{L^2(\Omega)}^2 &
\leq \frac{C}{h^{2}}\int_\Omega\left| \fint_{-h/2}^{h/2}R^h(x')a^h_{3\times 2}(x',t) 
- \nabla_{tan}u^h(x',t)~\mbox{d}t \right|^2~\mbox{d}x'\\ &
\leq \frac{C}{h^{3}}\int_{\Omega^h} |\nabla u^h(x) - R^h(x') a^h(x)|^2~\mbox{d}x
\leq Ch^2,
\end{split}
\end{equation*}
and hence by (\ref{m3}) $\nabla V^h$ converges in 
$L^2(\Omega,\mathbb{R}^{3\times 2})$ to $A$. Consequently, by (\ref{m22}):
\begin{equation}\label{m6}
\lim_{h\to 0} V^h = V  \mbox{ in } W^{1,2}(\Omega, \mathbb{R}^{3}),
\quad V\in  W^{2,2}(\Omega, \mathbb{R}^{3})\quad  \mbox{ and } 
~~\nabla V= A_{3\times 2}.
\end{equation}
Use now (\ref{m4}) to conclude that $\mbox{sym}\nabla (V_{tan}) = 0$ 
and so by Korn's inequality $V_{tan}$ must be constant, 
hence $0$ in view of (\ref{m22}). This ends the proof of (ii).

To deduce (iii), divide both sides of (\ref{m55}) by $h$ and pass to the limit
with its symmetric part. By (\ref{m5}), the first bound in (\ref{m1}) 
and (\ref{m22}) we conclude that:
$$\|h^{-1} V^h_{tan}\|_{W^{1,2}(\Omega)}\leq 
C\|\nabla (h^{-1}V^h_{tan})\|_{L^2(\Omega)}=
C \|\mbox{sym}\nabla (h^{-1}V^h_{tan})\|_{L^2(\Omega)}\leq C,$$
which proves the claim.
\endproof

\bigskip

\noindent {\bf Proof of Theorem \ref{liminf}}

\noindent {\bf 1.} Define the rescaled strains 
$G^h\in L^2(\Omega^1, \mathbb{R}^{3\times 3})$ by:
$$G^h(x', x_3) = \frac{1}{h^2}\Big((R^h(x'))^T \nabla u^h(x', hx_3)a^h(x', hx_3)^{-1} 
- \mbox{Id}\Big).$$
Clearly, by (\ref{m1}) $\|G^h\|_{L^2(\Omega^1)}\leq C$ and hence, up to a subsequence:
\begin{equation}\label{ma0}
\lim_{h\to 0} G^h = G \qquad \mbox{weakly in } L^2(\Omega^1,\mathbb{R}^{3\times 3}).
\end{equation}
We shall now derive a property of the limiting strain $G$.
Observe first that:
\begin{equation}\label{ma1}
\lim_{h\to 0} \frac{1}{h^2} (\partial_3y^h - he_3) = Ae_3 \quad \mbox{ in }
L^2(\Omega^1,\mathbb{R}^3),
\end{equation}
where $e_3 = (0,0,1)^T$. This is because:
\begin{equation*}
\begin{split}
\frac{1}{h^2} (\partial_3&y^h(x) - he_3) 
= \frac{1}{h}\Big((\bar R^h)^T\nabla u^h(x', hx_3)
-\mbox{Id}\Big)e_3 \\ &=  \frac{1}{h}(\bar R^h)^T\Big(\nabla u^h(x', hx_3) 
- R^h(x') a^h(x', hx_3)\Big)e_3\\
& \qquad +  \frac{1}{h}(\bar R^h)^TR^h(x')\Big(a^h(x', hx_3) -\mbox{Id}\Big)e_3
+  \frac{1}{h}\Big((\bar R^h)^TR^h(x') -\mbox{Id}\Big)e_3, 
\end{split}
\end{equation*}
where the first term in the right hand side converges to $0$ in $L^2(\Omega^1)$
by (\ref{m1}), the second term to $0$ in $L^\infty(\Omega^1)$,
and the last term to $Ae_3$ in $L^2(\Omega)$ by (\ref{m3}).

For each small $s>0$ define now, with a small abuse of notation, 
the sequence of functions $f^{s,h}\in W^{1,2}(\Omega^1,\mathbb{R}^3)$:
\begin{equation}\label{ma2}
f^{s,h}(x) = \frac{1}{h^2}\frac{1}{s} \Big(y^h(x+se_3) - y^h(x) - hse_3\Big).
\end{equation}
Clearly $f^{s,h}(x) = \frac{1}{h^2}\fint_0^s y^h(x+te_3)-he_3~\mbox{d}t$, and so
by (\ref{ma1}):
$$\lim_{h\to 0} f^{s,h} = Ae_3 \quad \mbox{ in } L^2(\Omega^1, \mathbb{R}^3).$$
Also $\partial_3 f^{s,h}(x) = \frac{1}{s}\frac{1}{h^2} (\partial_3 y^h(x+se_3) 
- \partial_3 y^h(x))$ so again by (\ref{ma1}):
$$\lim_{h\to 0}\partial_3f^{s,h} = 0\quad\mbox{ in } L^2(\Omega^1,\mathbb{R}^3).$$
Further, for any $\alpha=1,2$ we have:
\begin{equation*}
\begin{split}
\partial_\alpha f^{s,h}(x) &
= \frac{1}{h^2}\frac{1}{s}(\bar R^h)^T\Big(\nabla u^h(x', hx_3+hs)
-\nabla u^h(x',hx_3)\Big)e_\alpha \\ &=  \frac{1}{s}(\bar R^h)^TR^h(x') 
\Bigg(G^h(x', x_3+s)a^h(x', hx_3+hs) - G^h(x',x_3)a^h(x',hx_3) \\
& \qquad\qquad\qquad\qquad\qquad\qquad
+ \frac{1}{h^2}\Big(a^h(x', hx_3+hs) - a^h(x', hx_3)\Big)\Bigg)e_\alpha,
\end{split}
\end{equation*}
which, in view of (\ref{ma0}) and (\ref{m2}) yields
the weak convergence in  $L^2(\Omega^1,\mathbb{R}^{3\times 2})$ of:
\begin{equation}\label{ma3}
\lim_{h\to 0} \partial_\alpha f^{s,h}(x) 
= \frac{1}{s} \Big(G(x',x_3+s) - G(x', x_3)\Big)e_\alpha
+\kappa_g(x') e_\alpha.
\end{equation}
Consequently, we see that $f^{s,h}$ converges weakly in 
$W^{1,2}(\Omega,\mathbb{R}^3)$ to $Ae_3$. 
Hence, the left hand side in (\ref{ma3}) equals $\partial_\alpha(A e_3)$ and so:
\begin{equation}\label{ma4}
G(x)_{3\times 2} = G_0(x')_{3\times 2} + x_3 G_1(x')_{3\times 2},
\end{equation}
for some $G_0\in L^2(\Omega,\mathbb{R}^{3\times 3})$ where:
\begin{equation}\label{madefG1}
G_1(x') = \nabla (A(x')e_3) - \kappa_g(x').
\end{equation}

{\bf 2.} Divide now both sides of (\ref{m55}) by $h$ and pass to the weak limit
in $L^2(\Omega,\mathbb{R}^{3\times 2})$ with its symmetric part.
Since $\lim h^{-1} \mbox{sym}\nabla V^h = \mbox{sym}\nabla w$ by Theorem 
\ref{compactness} (iii), and
$\lim h^{-1}\mbox{sym } A^h_{2\times 2} = (\mbox{sym } \epsilon_g)_{2\times 2}
+ \frac{1}{2} (A^2)_{2\times 2}$ by (\ref{m5}), and:
$$\lim_{h\to 0} \mbox{sym} \left((\bar R^h)^TR^h(x') \int_{-1/2}^{1/2} G^h(x', t)
a^h(x',ht)~\mbox{d}t\right)_{3\times 2} = G_0(x')_{3\times 2},$$
by (\ref{ma0}), (\ref{ma4}) and (\ref{m2}), we obtain:
\begin{equation}\label{ma5}
\mbox{sym} \nabla w - (\mbox{sym } \epsilon_g)_{2\times 2} 
- \frac{1}{2} (A^2)_{2\times 2}= -G_0(x')_{2\times 2}.
\end{equation}

{\bf 3.} We shall now prove the bound of the Theorem \ref{liminf}. 
First, Taylor expanding the function $W(F)$ close to $F=\mbox{Id}$, and recalling
(\ref{defQ}) we obtain:
\begin{equation*}
\begin{split}
\frac{1}{h^4} W \Big(\nabla u^h(x) a^h(x)^{-1} \Big)&
= \frac{1}{h^4} W\Big(R^h(x)^T \nabla u^h(x) a^h(x)^{-1} \Big) \\ & = 
\frac{1}{h^4} W(\mbox{Id} + h^2 G^h(x)) = \frac{1}{2} \mathcal{Q}_3(G^h(x))
+ h^2\mathcal{O}(|G^h(x)|^3).
\end{split}
\end{equation*}
Consider now sets $\Omega_h = \{x\in \Omega^1; ~~h|G^h(x', h_3)|\leq 1\}$.
Clearly $\chi_{\Omega^h}$ converges to $1$ in $L^1(\Omega^1)$, with $h\to 0$, 
as $hG^h$ converges to $0$ pointwise a.e. by (\ref{m1}).
We get:
\begin{equation}\label{ma6}
\begin{split}
\liminf_{h\to 0} \frac{1}{h^4}& I^h_W(u^h)  \geq \liminf_{h\to 0} \frac{1}{h^4}
\int_{\Omega^1}\chi_{\Omega_h}W\Big(\nabla u^h(x',hx_3)
a^h(x',hx_3)^{-1}\Big)~\mbox{d}x\\ &
= \liminf_{h\to 0} \left(\frac{1}{2}\int_{\Omega^1}\mathcal{Q}_3(\chi_{\Omega_h}G^h)
+ o(1) \int_{\Omega^1}|G^h|^2\right)\\ &
\geq \frac{1}{2}\int_{\Omega^1}\mathcal{Q}_3(\mbox{sym }G(x))~\mbox{d}x,
\end{split}
\end{equation}
where the last inequality follows by (\ref{m1}) guaranteeing convergence to $0$
of the term $o(1)\int |G^h|^2$, and by the fact that $\chi_{\Omega_h}G^h$
converges weakly to $G$ in $L^2(\Omega^1,\mathbb{R}^{3\times 3})$ (see (\ref{ma0}))
in view of $\mathcal{Q}_3$ being positive definite on and depending only 
on the symmetric part of its argument.

Further:
\begin{equation}\label{ma7}
\begin{split}
\frac{1}{2}\int_{\Omega^1}&\mathcal{Q}_3(\mbox{sym }G)  \geq
\frac{1}{2}\int_{\Omega^1}\mathcal{Q}_2(\mbox{sym }G_{2\times 2}(x))~\mbox{d}x\\ &
= \frac{1}{2}\int_{\Omega^1}\mathcal{Q}_2\Big(\mbox{sym }G_0(x')_{2\times 2}
+ x_3 \mbox{sym }G_1(x')_{2\times 2}\Big)~\mbox{d}x \\ & = 
\frac{1}{2}\int_{\Omega^1}\mathcal{Q}_2(\mbox{sym }G_0(x')_{2\times 2})
+ \frac{1}{2}\int_{\Omega^1}x_3^2\mathcal{Q}_2(\mbox{sym }G_1(x')_{2\times 2})\\ &
= \frac{1}{2}\int_{\Omega}\mathcal{Q}_2\left(\mbox{sym }\nabla w
- (\mbox{sym }\epsilon_g)_{2\times 2} - \frac{1}{2}(A^2)_{2\times 2}\right) 
\\ & \qquad\qquad\qquad
+ \frac{1}{24}\int_{\Omega}\mathcal{Q}_2\Big(\mbox{sym }(\nabla Ae_3)_{2\times 2}
- (\mbox{sym }\kappa_g)_{2\times 2}\Big),
\end{split}
\end{equation}
by (\ref{ma4}), (\ref{ma5}) and (\ref{madefG1}).
Now,  in view of Theorem
\ref{compactness} (ii) and (\ref{m6})
one easily sees that:
$$ (A^2)_{2\times 2} = -\nabla v\otimes \nabla v \quad \mbox{ and } \quad
\Big(\nabla Ae_3\Big)_{2\times 2} = -\nabla v^2,$$
which yields the claim by (\ref{ma6}) and (\ref{ma7}). 
\endproof

\section{Recovery sequence: 
a proof of Theorem \ref{thmaindue}}

For any $F \in{\mathbb R}^{2\times2}$,  by $(F)^*\in{\mathbb R}^{3\times 3}$ 
we denote the matrix for which $ (F)^*_{2\times2} = F$ 
and $(F)^*_{i3}= (F)^*_{3i} =0$, $i=1..3$.   
Recalling (\ref{defQ}), let $c(F)\in {\mathbb R}^3$ be the unique vector so that: 
$${\mathcal Q}_2 (F) = {\mathcal Q}_3 \Big( (F)^* +  
\mbox{sym}(c \otimes e_3) \Big).$$ 
The mapping $c:{\mathbb R}^{2\times 2}_{sym} \longrightarrow {\mathbb R}^3$ 
is well-defined and linear, as $\mathcal {Q}_3$ is a quadratic form, 
positive definite on the space of symmetric matrices. 
Also, for all $F\in {\mathbb R}^{3\times3}$, by $l(F)$ we denote 
the unique vector in ${\mathbb R}^3$, linearly depending on $F$,  for which:
$$\mbox{sym}\big(F - (F_{2\times 2})^*\big) 
= \mbox{sym}\big(l(F) \otimes e_3\big).$$

{\bf 1.} Let the in-plane displacement $w$ and the out-of-plane displacement
$v$ be as in Theorem \ref{thmaindue}.  
We first prove the result under the additional assumption of
$w$ and $v$ being smooth up to the boundary. 
Define the recovery sequence:
\begin{equation}\label{recoveryseq}
u^h (x', x_3)= \left[\begin{array}{c}x'\\0 \end{array}\right] 
+ \left[\begin{array}{c}h^2w(x')\\ hv(x')\end{array}\right] 
+ x_3 \left[\begin{array}{c}-h\nabla v(x')\\1\end{array}\right] 
+  h^2 x_3 d^{0}(x') + \frac{1}{2}h x_3^2 d^{1}(x),
\end{equation} 
where the smooth warping fields $d^0, d^1:\overline{\Omega}\longrightarrow
\mathbb{R}^3$ are given by: 
\begin{equation}\label{d01}
\begin{split}
d^0&=  l(\epsilon_g) - \frac 12 |\nabla v|^2 e_3  
+  c \Big(\mbox{sym} \nabla w  -  \frac 12 \nabla v \otimes \nabla v 
- (\mbox{sym } \epsilon_g)_{2\times2} \Big),  \\
d^1 &= l(\kappa_g) + c\Big(-\nabla^2 v - (\mbox{sym }\kappa_g)_{2\times 2}\Big).  
\end{split}
\end{equation}
The convergence statements in (i), (ii) and (iii) of Theorem \ref{thmaindue}
are verified by a straightforward calculation. 
In order to establish (iv) we need to estimate 
the energy of the sequence $u^h$. Calculating the deformation 
gradient we first obtain:
$$\nabla u^h = \mbox{Id} + h^2 (\nabla w)^* + hA-hx_3 (\nabla^2 v)^* 
+ h^2 \left[\begin{array}{cc} x_3 \nabla d^{0} & d^{0} \end{array} \right] 
+ h \left[\begin{array}{cc} \frac{1}{2} x_3^2 \nabla d^{1} & x_3 d^{1} 
\end{array} \right], $$
where the skew-symmetric matrx field $A$ is given as: 
$$ A = \left [\begin{array}{cc} 0 & - (\nabla v)^T\\
\nabla v & 0  \end{array}\right]. $$ 
We shall use an auxiliary $SO(3)$-valued matrix field $R^h = e^{hA}$.
Clearly: $R^h = \mbox{Id} + h A + \frac {h^2}{2} A^2 +  \mathcal{O}(h^3)$ and
$(R^h)^T = \mbox{Id} - hA + \frac {h^2}{2} A^2 + \mathcal{O}(h^3)$. 
Also, recall that:
$(a^h)^{-1} = \mbox{Id}- h^2 \epsilon_g - hx_3\kappa_g + \mathcal{O}(h^3)$.
We hence obtain:
$$(R^h)^T (\nabla u^h) (a^h)^{-1} = \mbox{Id} +  h^2\big((\nabla w)^* 
- \frac 12 A^2 - \epsilon_g+  d^0 \otimes e_3\big)
+ hx_3\big(-(\nabla^2 v)^* - \kappa_g + d^1\otimes e_3\big) + \mathcal{O}(h^3).$$  
Recalling now the definition of the quadratic form:
$\mathcal {Q}_3 (F)= D^2W(\mbox{Id})(F\otimes F)
= \mathcal Q_3 (\mbox{sym} F)$, Taylor expanding the energy density $W$
around the identity, and taking into account the uniform boundedness of all 
the involved functions and their derivatives we get:  
\begin{equation*}
\begin{split}
I^h_W(u^h) & = \frac 1h \int_{\Omega^h} W(\nabla u^h (a^h)^{-1}) 
= \frac 1h \int_{\Omega^h} W \Big((R^h)^T (\nabla u^h)(a^h)^{-1}\Big) \\ 
& = \frac {h^4}{2} \int_{\Omega} \mathcal{Q}_3\left(\mbox{sym} \Big(
(\nabla w)^* - \frac 12 A^2 - \epsilon_g+  d^0\otimes e_3 \Big)\right)\\ 
& \qquad + \frac {h^4}{24}\int_{\Omega}\mathcal {Q}_3\left(\mbox{sym}
\Big(-(\nabla^2 v)^* - \kappa_g + d^1 \otimes e_3 \Big) \right)+ \mathcal{O}(h^5). 
\end{split}
\end{equation*} 
Note that $A^2 = (\nabla v \otimes \nabla v)^* - |\nabla v|^2 (e_3 \otimes e_3)$.
Therefore:
\begin{equation*}
\begin{split}
&\mbox{sym} \left((\nabla w)^* - \frac 12 A^2 - \epsilon_g+ d^0\otimes e_3\right)
=  \left(\mbox{sym}\nabla w  + \frac 12 \nabla v \otimes\nabla v - 
(\mbox{sym }\epsilon_g)_{2\times2}\right)^*\\  
&\qquad\qquad\qquad\qquad \qquad\qquad \qquad\qquad \qquad
+ \mbox{sym}\left(\big(d^0 - l_{\epsilon_g} + \frac 12 |\nabla v|^2 e_3\big) 
\otimes e_3\right),\\
&\mbox{sym}\left(-(\nabla^2 v)^* - \kappa_g + d^1\otimes e_3 \right) 
= \left(-\nabla^2 v - (\mbox{sym }\kappa_g)_{2\times 2}\right)^* 
+ \mbox{sym}\left((d^1 - l_{\kappa_g})\otimes e_3 \right).
\end{split}
\end{equation*}     
In view of (\ref{d01}) it follows that:
\begin{equation}\label{finalestimate}
\frac{1}{h^4} I^h_W(u^h) = \mathcal{I}_g (w,v) + \mathcal{O}(h), 
\end{equation} 
which proves the desired limit (iv) for smooth displacements $w, v$.

{\bf 2.}
In order to carry out the analysis for $w\in W^{1,2}(\Omega,{\mathbb R}^2)$ 
and $v\in W^{2,2}(\Omega,\mathbb{R})$, it suffices to suitably approximate them 
in their respective norms by smooth sequences $w_h$ and $v_h$. Define the 
sequence $u^h$ as in (\ref{recoveryseq}) using $w_h$ and $v_h$ instead of 
$w$ and $v$ for each $h$. The error $\mathcal{O}(h)$
in the final estimate (\ref{finalestimate}), contains now
an additional term $h C(w_h,v_h)$
where the quantity $C(w_h, v_h)$ depends only on the higher norms of 
$w_h$ and $v_h$. This quantity can always be controlled by a uniform constant,
by slowing down the rate of convergence of the sequences $w_h$ and $v_h$.
\endproof

\section{Euler-Lagrange equations of the functional $\mathcal{I}_g$:
the derivation of (\ref{EL-2}) and the free boundary conditions}

Assume that $(w,v)$ is a local minimizer of $\mathcal{I}_g$ 
in (\ref{vonKarman}) with $\mathcal{Q}_2$ as in (\ref{Q23}).

{\bf 1.} Consider a variation $\phi\in\mathcal{C}_0^\infty(\Omega, \mathbb{R}^2)$
in $w$. That is, for all small (positive or negative) $\varepsilon$:
$$ \mathcal{I}_g(w+\varepsilon \phi, v) - \mathcal{I}_g(w, v) \geq 0.$$
Collecting terms of order $\varepsilon$, we obtain that:
\begin{equation}\label{7.0}
\int_\Omega \left(2\mu (\mbox{sym }\nabla\phi) : (\mbox{sym } \nabla w +\Psi)
+ \frac{2\mu\lambda}{2\mu+\lambda}(\mbox{div}\phi)~ 
(\mbox{div }w + \mbox{tr }\Psi)\right) = 0,
\end{equation}
where $\Psi\in  L^2(\Omega,\mathbb{R}^{2\times 2})$ is the vector field:
\begin{equation}\label{Psi}
\Psi = \frac{1}{2} \nabla v\otimes\nabla v 
- (\mbox{sym } \epsilon_g)_{2\times 2}.
\end{equation}
After integrating (\ref{7.0}) by parts and recalling the fundamental theorem of 
calculus of variations, we obtain:
\begin{equation}\label{el1}
\mbox{div } \mathbf{M} = 0, \qquad \mbox{ with } \quad
\mathbf{M}=2\mu (\mbox{sym }\nabla w + \Psi) 
+ \frac{2\mu\lambda}{2\mu+\lambda}(\mbox{div }w + \mbox{tr }\Psi)\mbox{Id}.
\end{equation}
Above, the divergence of a symmetric matrix field $\mathbf{M}$ is taken row-wise.
Consequently, the $i$-th row of $\mathbf{M}$ ($i=1,2$) 
can be written as $\nabla^\perp\psi^i$ for some scalar 
fields $(\psi^1,\psi^2)$. The symmetric matrix field $\mbox{cof }\mathbf{M}$ 
has hence the form $\nabla (\psi^2,-\psi^1)^T$, which implies that:
$$\mbox{cof } \mathbf{M} = \nabla^2\Phi \quad \mbox{ for some } \quad
\Phi\in W^{2,2}(\Omega,\mathbb{R}).$$
Recall that for a matrix $M\in\mathbb{R}^{n\times n}$, $\mbox{cof } M$ 
denotes the matrix of cofactors of $M$, that is 
$(\mbox{cof } M)_{ij} = (-1)^{i+j} \det \hat M_{ij}$, where 
$\hat M_{ij}\in\mathbb{R}^{(n-1)\times (n-1)}$ is obtained from $M$ by deleting 
its $i$th row and $j$th column.

From this discussion we see that (\ref{el1}) is equivalent with:
\begin{equation}\label{el2}
\mathbf{M} = \mbox{cof }\nabla^2\Phi. 
\end{equation}
In classical elasticity, the scalar field $\Phi$ is called the Airy stress potential.

{\bf 2.} We shall now need the following result:
\begin{lemma}\label{lemcurltcurl}
Let $\alpha,\beta\in\mathbb{R}$ be such that: $\alpha\neq 0$ 
and $\alpha+2\beta\neq 0$.
Then the following conditions are equivalent, for any matrix 
field $F\in L^2(\Omega,\mathbb{R}^{2\times 2})$:
\begin{itemize}
\item[(i)] $F=\alpha~\mathrm{ sym}\nabla w + \beta (\mathrm{div }~w)\mathrm{Id}$,
for some $w\in W^{1,2}(\Omega,\mathbb{R}^2)$,
\item[(ii)] $\displaystyle \mathrm{curl}^T\mathrm{curl }~ B
- \frac{\beta}{\alpha+2\beta} \Delta (\mathrm{tr }F) = 0$,
in the sense of distributions. 
\end{itemize}
\end{lemma}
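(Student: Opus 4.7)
The strategy is to reduce both implications to the two-dimensional Saint-Venant compatibility theorem: on the simply connected domain $\Omega$, a symmetric matrix field $G \in L^2(\Omega, \mathbb{R}^{2\times 2}_{\mathrm{sym}})$ satisfies $\mathrm{curl}^T\mathrm{curl}\, G = 0$ in the sense of distributions if and only if $G = \mathrm{sym}\,\nabla w$ for some $w \in W^{1,2}(\Omega,\mathbb{R}^2)$. The algebraic bridge between (i) and (ii) is the elementary identity $\mathrm{curl}^T\mathrm{curl}(\phi\,\mathrm{Id}) = \Delta\phi$ for $\phi\in L^2(\Omega)$, immediate from the explicit formula for $\mathrm{curl}^T\mathrm{curl}$ recalled in the introduction, together with the companion observation that $\mathrm{curl}^T\mathrm{curl}(\mathrm{sym}\,\nabla w) = 0$ for any $w\in W^{1,2}$.

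For the direction (i)$\Rightarrow$(ii) I would simply substitute $F = \alpha\,\mathrm{sym}\,\nabla w + \beta(\mathrm{div}\, w)\,\mathrm{Id}$ and compute. Taking traces gives $\mathrm{tr}\, F = (\alpha+2\beta)\,\mathrm{div}\, w$. The two observations above imply that only the identity-multiple contributes to $\mathrm{curl}^T\mathrm{curl}\, F$, so
\[
 \mathrm{curl}^T\mathrm{curl}\, F \;=\; \beta\,\Delta(\mathrm{div}\, w) \;=\; \frac{\beta}{\alpha+2\beta}\,\Delta(\mathrm{tr}\, F),
\]
which is (ii).

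For the converse I would rewrite (ii), via $\mathrm{curl}^T\mathrm{curl}(\phi\,\mathrm{Id}) = \Delta\phi$, in the form $\mathrm{curl}^T\mathrm{curl}\, G = 0$, where
\[
 G \;:=\; F - \frac{\beta}{\alpha+2\beta}\,(\mathrm{tr}\, F)\,\mathrm{Id}.
\]
Since $\mathrm{curl}^T\mathrm{curl}$ depends only on the symmetric part of its argument, Saint-Venant applied to $\mathrm{sym}\, G$ produces $w_0 \in W^{1,2}(\Omega,\mathbb{R}^2)$ with $\mathrm{sym}\, G = \mathrm{sym}\,\nabla w_0$. Taking the trace of the resulting identity for $\mathrm{sym}\, F$ and using $\alpha \neq 0$ yields $\mathrm{tr}\, F = \tfrac{\alpha+2\beta}{\alpha}\,\mathrm{div}\, w_0$; substituting back and setting $w := w_0/\alpha$ gives $\mathrm{sym}\, F = \alpha\,\mathrm{sym}\,\nabla w + \beta(\mathrm{div}\, w)\,\mathrm{Id}$, which is (i) for $\mathrm{sym}\, F$ and hence for $F$ in the symmetric case at hand (the elasticity tensor $\mathbf{M}$ of the application).

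The only substantive technical step is the Saint-Venant application in the distributional $L^2$ setting: given a symmetric $L^2$ matrix field whose $\mathrm{curl}^T\mathrm{curl}$ vanishes distributionally on a simply connected planar domain, one must produce a $W^{1,2}$ symmetrized-gradient potential. This is a classical consequence of the Poincaré lemma for distributional forms, most cleanly obtained via mollification and passage to the limit. Both nondegeneracy hypotheses enter at the anticipated places: $\alpha + 2\beta \neq 0$ in defining $G$, and $\alpha \neq 0$ in the final rescaling from $w_0$ to $w$.
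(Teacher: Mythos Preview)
Your proof is correct and follows essentially the same route as the paper: both directions hinge on the identity $\mathrm{curl}^T\mathrm{curl}(\phi\,\mathrm{Id})=\Delta\phi$, and for (ii)$\Rightarrow$(i) both you and the paper form $G=F-\tfrac{\beta}{\alpha+2\beta}(\mathrm{tr}\,F)\mathrm{Id}$, invoke Saint--Venant to write its symmetric part as a symmetrized gradient, and then recover $\mathrm{div}\,w$ by taking traces. Your treatment is in fact slightly more careful than the paper's in two respects: you flag the implicit symmetry hypothesis on $F$ (the paper's equivalence is only literally true for symmetric $F$, as needed in the application to $\mathbf{M}$), and you make explicit the distributional $L^2$ version of the Saint--Venant lemma that the paper invokes without comment.
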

\begin{proof}
We shall use the following easily obtained formulas:
$$\mbox{curl}^T\mbox{curl } (\gamma\mbox{Id}) = \Delta\gamma=
\mbox{div}^T\mbox{div } (\gamma\mbox{Id}),$$
valid for any scalar field $\gamma$ on $\Omega$.

To prove the implication (i)$\Rightarrow$(ii), note that by (i):
$\mbox{tr }F = (\alpha+2\beta)\mbox{ div }w$. Thus:
$$\mbox{curl}^T\mbox{curl } F = \beta~\mbox{curl}^T\mbox{curl }\Big((\mbox{div }w)
\mbox{Id}\Big) = \beta~ \Delta (\mbox{div } w) = \frac{\beta}{\alpha+2\beta}~
\Delta (\mathrm{tr }F). $$

To prove the reverse implication (ii)$\Rightarrow$(i) observe that by (ii):
$$\mbox{curl}^T\mbox{curl } F - \frac{\beta}{\alpha+2\beta} 
\mbox{curl}^T\mbox{curl } \Big((\mbox{tr }F)\mbox{Id}\Big)=0.$$ 
Thus:
\begin{equation}\label{elpomoc}
F - \frac{\beta}{\alpha+2\beta} (\mbox{tr }F)\mbox{Id} = \alpha ~\mbox{sym }\nabla w,
\end{equation}
for some vector field $w$. In particular:
$$ \mbox{div } w = \mbox{tr }(\mbox{sym }\nabla w)=
\frac{1}{\alpha} \left(\mbox{tr }F - \frac{2\beta}{\alpha+2\beta}~\mbox{tr }F\right)
= \frac{1}{\alpha+2\beta}\mbox{tr }F.$$
Together with (\ref{elpomoc}) the above implies (ii).
\end{proof}

\medskip

We now use Lemma \ref{lemcurltcurl} with: 
$$F=\mbox{cof }\nabla^2\Phi - \left(2\mu \Psi
+ \frac{2\mu\lambda}{2\mu + \lambda}(\mbox{tr }\Psi)\mbox{Id}\right),
\quad \alpha=2\mu, \quad\beta= 2\mu\lambda/(2\mu+\lambda).$$
Clearly, the condition (i) is equivalent to  (\ref{el2}) and hence 
(\ref{el2}) is further equivalent to (ii), which after recalling (\ref{Psi})
takes the form:
\begin{equation*}
\begin{split}
\mbox{curl}^T\mbox{curl }\Big(\mbox{cof }\nabla^2\Phi\Big) 
- \frac{\lambda}{2\mu+3\lambda}~
\Delta (\mbox{tr }\mbox{cof }\nabla^2\Phi) = \mu~ \mbox{curl}^T\mbox{curl }\Big(
\nabla v\otimes\nabla v - 2(\mbox{sym }\epsilon_g)_{2\times 2}\Big).
\end{split}
\end{equation*}
Since $\mbox{curl}^T\mbox{curl }(\mbox{sym } \epsilon_g)_{2\times 2} 
= \mbox{curl}^T\mbox{curl }(\epsilon_g)_{2\times 2}$, and
$\mbox{curl}^T\mbox{curl }(\nabla v\otimes\nabla v) = -2~\mbox{det }\nabla^2v$
and both $\mbox{curl}^T\mbox{curl }(\mbox{cof }\nabla^2\Phi)$, and 
$\Delta (\mbox{tr }\mbox{cof }\nabla^2\Phi)$ equal $\Delta^2\Phi$, we obtain:
\begin{equation*}
\frac{2(\mu+\lambda)}{2\mu+3\lambda}~ \Delta^2\Phi 
= -2\mu\Big(\mbox{det}\nabla^2v 
+ \mbox{curl}^T\mbox{curl}(\epsilon_g)_{2\times 2}\Big),
\end{equation*}
or equivalently:
\begin{equation}\label{el3}
\Delta^2\Phi 
= -S\Big(\mbox{det}\nabla^2v 
+ \mbox{curl}^T\mbox{curl}(\epsilon_g)_{2\times 2}\Big).
\end{equation}

{\bf 3.}  Consider now a variation 
$\varphi\in\mathcal{C}_0^\infty(\Omega, \mathbb{R})$
in $v$, so that for all small positive and negative $\varepsilon$:
$$ \mathcal{I}_g(w, v+\varepsilon \varphi) - \mathcal{I}_g(w, v) \geq 0.$$
Collecting terms of order $\varepsilon$
and calling: 
\begin{equation}\label{7.5}
\tilde\Psi = \nabla^2v + (\mbox{sym }\kappa_g)_{2\times 2},
\end{equation}
we obtain that:
\begin{equation}\label{el4}
\int_\Omega\Big( (\nabla\varphi\otimes\nabla v):\mathbf{M} + B
\nabla^2\varphi : (\tilde\Psi + \nu \mbox{ cof }\tilde\Psi)\Big) = 0,
\end{equation}
where we used the following identity, valid for any $F\in \mathbb{R}^{2\times 2}_{sym}$:
$$2\mu F  + \frac{2\mu\lambda}{2\mu+\lambda}~(\mbox{tr }F)\mbox{Id}
= 12 B (F + \nu \mbox{ cof } F).$$
By (\ref{el2}) the first term in the integrand of (\ref{el4}) equals
$\nabla \varphi \cdot ((\mbox{cof }\nabla^2\Phi)\nabla v)$. 
Integrate by parts in (\ref{el4}) and use fundamental theorem of calculus of
variations to obtain:
\begin{equation}\label{el5}
-\mbox{div} \Big((\mbox{cof }\nabla^2\Phi)\nabla v\Big) +
B \mbox{ div}^T\mbox{div}\left(\tilde\Psi 
+ \nu \mbox{ cof }\tilde\Psi\right)=0.
\end{equation}
Use now the following formulas:
$$\mbox{div} \Big((\mbox{cof }\nabla^2\Phi)\nabla v\Big) =
(\mbox{cof }\nabla^2\Phi):\nabla^2 v,\qquad
\mbox{div}^T\mbox{div }(\nabla^2 v) = \Delta^2 v,$$ 
and remember that $\mbox{div}\mbox{cof}$ of a gradient of a vector field
vanishes, to find the following equivalent form of (\ref{el5}):
\begin{equation}\label{el6}
\begin{split}
(\mbox{cof }\nabla^2\Phi):\nabla^2 v = B \Delta^2 v + 
B \mbox{ div}^T\mbox{div}\Big((\mbox{sym }\kappa_g)_{2\times 2}
+ \nu \mbox{ cof }(\mbox{sym }\kappa_g)_{2\times 2}\Big)
\end{split}
\end{equation}
Recalling the definition of Airy' bracket:
$$[v,\Phi] = \nabla^2 v : (\mbox{cof }\nabla^2\Phi),$$ 
so that $\mbox{det }\nabla^2 v = 1/2 [v,v]$, we see that (\ref{el3}) 
and (\ref{el6}) give the system (\ref{EL-2}).

\medskip

{\bf 4.} We will now derive the natural (free) boundary conditions 
satisfied by the minimizers $(w,v)$ of $\mathcal{I}_g$ in (\ref{vonKarman}). 
The analysis is equivalent as the above, for the variations 
$\phi$ and $\varphi$ which do not vanish on the boundary of $\partial\Omega$. 

Integrating (\ref{7.0}) by parts and taking into account (\ref{el1}), we obtain:
\begin{equation}\label{7.00}
 {\mathbf M} \vec n =0 \quad \mbox{ on } \partial\Omega, 
\end{equation}
where $\vec n$ denotes the normal to $\partial\Omega$. Hence by (\ref{el2}):
$(\mbox{cof }\nabla^2\Phi)\vec n=0$, which is equivalent to:
$ \partial_\tau \nabla \Phi =0$, 
for the tangent vector field $\tau$ to $\partial\Omega$.
Therefore:
$$ \nabla \Phi \equiv const. \quad \mbox{ on } \partial \Omega. $$ 
Since $\Phi$ is detemined up to affine functions, we may assume that  
$\Phi(x_0) $ and $\nabla \Phi(x_0)$ vanish at 
a given point $x_0\in \partial \Omega$. 
We obtain hence the first set of boundary conditions, for $\Phi$: 
\begin{equation}\label{bc1}
\Phi = \partial_{\vec n} \Phi =0  \quad \mbox{ on } \partial \Omega.
\end{equation} 
To deduce the boundary conditions for the out-of-plane displacement $v$ 
we use (\ref{el4}) which is again valid 
for all $\varphi \in \mathcal{C}^\infty(\overline\Omega,\mathbb{R})$. 
Integrating by parts as before and applying (\ref{el5}) yields:
\begin{equation*}
\begin{split}
\int_{\partial \Omega} \Big(\mathbb{M} : (\nabla v \otimes \vec n)
~ \varphi + B ~\nabla\phi \cdot (\tilde\Psi +\nu 
\mbox{ cof }\tilde\Psi) \vec n 
- B \mbox{ div}(\tilde\Psi +\nu \mbox{ cof }\tilde\Psi)
\cdot\vec n~\varphi\Big) =0.  
\end{split}
\end{equation*} 
The first term above drops out by (\ref{7.00}).
Writing $\nabla\varphi = (\partial_\tau\varphi)\tau 
+ (\partial_{\vec n}\varphi)\vec n$ 
we obtain the following new boundary equations:
\begin{equation*}
\left(\tilde\Psi +\nu \mbox{ cof }\tilde\Psi\right) 
: (\vec n\otimes\vec n) = 0.
\end{equation*}
and:
\begin{equation*}
\partial_\tau\Big((\tilde\Psi +\nu 
\mbox{ cof }\tilde\Psi):(\vec n\otimes \vec\tau)\Big)  
+ \mbox{div} \left(\tilde\Psi +\nu 
\mbox{ cof }\tilde\Psi\right) \vec n = 0,
\end{equation*}
which are, respectively, equivalent to:
\begin{equation}\label{b1}
\tilde\Psi : (\vec n\otimes\vec n) + \nu~\tilde\Psi : (\tau\otimes\tau)  = 0
\qquad \mbox{on } \partial\Omega,
\end{equation}
\begin{equation}\label{b2}
(1 - \nu) \partial_\tau\Big(\tilde\Psi : (\vec n\otimes\tau)\Big) 
+ \mbox{div} \left(\tilde \Psi + \nu~\mbox{cof }\tilde \Psi\right)\vec n =0
\qquad \mbox{on } \partial\Omega. 
\end{equation}
In the particular case when $(\mbox{sym }\kappa_g)_{2\times 2}=0$ 
on $\partial\Omega$, (\ref{b1}) and (\ref{b2}) become:
\begin{equation*}
\begin{split}
&\partial^2_{\vec n \vec n} v 
+ \nu\Big(\partial^2_{\tau\tau} v - K\partial_{\vec n} v\Big) = 0\\
&(2-\nu) \partial_{\tau} \partial_{\vec n} \partial_{\tau} v +
\partial^3_{\vec n \vec n \vec n} v
+ K\Big(\Delta v + 2\partial^2_{\vec n \vec n} v\Big) = 0,
\end{split}
\end{equation*}
where $K$ stands for the (scalar) curvature of $\partial\Omega$, so that
$\partial_{\tau}\tau = K\vec n$.
If additionally $\partial\Omega$ is a polygonal, then
the above equations simplify to equations (5) in \cite{Maha}.
\endproof

\section{Discussion 
and a proof of Corollary \ref{minsconverge}}

Recall that in classical elasticity it is usually the magnitude of the applied
forces or the types of boundary conditions which
determine  the behavior of thin plates or shells, see e.g. \cite{FJMhier}. 
Such exterior constraints are replaced in our case by the geometric constraints 
induced by the prescribed metric. In this line, we  conjecture existence of
a hierarchy of limit model theories,  
depending on the choice of the tensor $a^h$ whose
qualifications can be predicted by geometric observations.
Indeed, in what follows, we demonstrate how $a^h$ in (\ref{ahform})
is related to conditions (\ref{CO1}) and (\ref{CO2}) through 
the Gauss-Codazzi equations or through an expansion of the 
Riemann curvature tensor of the metric $G^h = (a^h)^T a^h$.
In particular, these two conditions can be interpreted as 
the leading order defect in $G^h$ from being a flat metric. Other
choices of scalings in $a^h$ should in turn
impose the corresponding scalings of the energy and the 
acceptable displacements for the limit model.

Corollary \ref{minsconverge} follows now from 
the next result:

\begin{lemma}\label{GCM}
For any $w\in W^{1,2}(\Omega,\mathbb{R}^2)$ and 
$v\in W^{2,2}(\Omega,\mathbb{R})$, the following are equivalent:
\begin{itemize}
\item[(i)] $\mathcal{I}_g(w,v) = 0$,
\item[(ii)] $\mathrm{curl }\big((\mathrm{sym }~\kappa_g)_{2\times 2}\big) = 0$
and $\displaystyle\mathrm{curl}^T\mathrm{curl}~ (\epsilon_g)_{2\times 2}= 
-  \mathrm{det}\big((\mathrm{sym }~\kappa_g)_{2\times 2}\big).$
\end{itemize}
The two equations in (ii) are the linearised Gauss-Codazzi-Meinardi 
equations corresponding to the metric
$\mathrm{Id} + 2 h^2 \mathrm{sym} (\epsilon_g)_{2\times 2}$
and the shape operator $h(\mathrm{sym }~\kappa_g)_{2\times 2}$
on the mid-plate $\Omega$.
\end{lemma}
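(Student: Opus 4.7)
\noindent\textbf{Proof proposal for Lemma \ref{GCM}.}
The plan is to reduce the energy identity (i) to two pointwise PDEs and then recognise (ii) as their precise integrability conditions on the simply connected domain $\Omega$. Since both arguments of $\mathcal{Q}_2$ appearing in (\ref{vonKarman}) are already symmetric, and $\mathcal{Q}_2$ is positive definite on $\mathbb{R}^{2\times2}_{sym}$ by the nondegeneracy statement in (\ref{defQ}), the vanishing of $\mathcal{I}_g(w,v)$ is equivalent to the simultaneous pointwise identities
\begin{equation*}
\mathrm{sym}\,\nabla w + \tfrac{1}{2}\,\nabla v\otimes \nabla v \,=\, (\mathrm{sym}\,\epsilon_g)_{2\times 2},
\qquad
\nabla^2 v \,=\, -(\mathrm{sym}\,\kappa_g)_{2\times 2}
\qquad\text{a.e. in }\Omega.
\end{equation*}
Once this reduction is made, the two conditions in (ii) are precisely the statements that these equations admit a solution in the required Sobolev classes.

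\medskip

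For the implication (i)$\Rightarrow$(ii), the second pointwise identity exhibits $-(\mathrm{sym}\,\kappa_g)_{2\times 2}$ as the Hessian of $v$, whose rows are exact $1$-forms, so the row-wise curl vanishes, yielding the first half of (ii). For the second half I apply $\mathrm{curl}^T\mathrm{curl}$ to the first pointwise equation: the Saint-Venant identity $\mathrm{curl}^T\mathrm{curl}(\mathrm{sym}\,\nabla w)=0$ annihilates the first term, the relation $\mathrm{curl}^T\mathrm{curl}(F)=\mathrm{curl}^T\mathrm{curl}(\mathrm{sym}\,F)$ recorded after Theorem \ref{contra_general} kills the symmetriser on the right-hand side, and the identity $\mathrm{curl}^T\mathrm{curl}(\nabla v \otimes \nabla v) = -2\det\nabla^2 v$ already invoked in Section~5 handles the quadratic term, producing
\begin{equation*}
\mathrm{curl}^T\mathrm{curl}\,(\epsilon_g)_{2\times 2} \,=\, -\det\nabla^2 v \,=\, -\det\bigl((\mathrm{sym}\,\kappa_g)_{2\times 2}\bigr),
\end{equation*}
where the last step uses $\det(-M)=\det M$ for $2\times 2$ matrices together with the second pointwise identity.

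\medskip

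For the converse (ii)$\Rightarrow$(i), I use simple connectedness of $\Omega$ twice. First, the vanishing of the row-wise curl of the symmetric field $-(\mathrm{sym}\,\kappa_g)_{2\times 2}$ yields, by a first application of the Poincar\'e lemma, potentials $(f_1,f_2)$ whose gradients are the two rows; symmetry of the field translates into $\partial_2 f_1=\partial_1 f_2$, so a second application produces $v\in W^{2,2}(\Omega,\mathbb{R})$ with $\nabla^2 v = -(\mathrm{sym}\,\kappa_g)_{2\times 2}$. Then the symmetric field
\begin{equation*}
S := (\mathrm{sym}\,\epsilon_g)_{2\times 2} - \tfrac{1}{2}\,\nabla v \otimes \nabla v
\end{equation*}
satisfies $\mathrm{curl}^T\mathrm{curl}\,S = \mathrm{curl}^T\mathrm{curl}\,(\epsilon_g)_{2\times 2} + \det\nabla^2 v = 0$ by the second condition in (ii); the Saint-Venant compatibility lemma then produces $w\in W^{1,2}(\Omega,\mathbb{R}^2)$ with $\mathrm{sym}\,\nabla w = S$. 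Both pointwise equations now hold, so $\mathcal{I}_g(w,v)=0$.

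\medskip

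The argument is essentially algebraic once the pointwise equations are isolated; the only technical point is the Poincar\'e/Saint-Venant step in low Sobolev regularity, but this is standard for smooth data $\epsilon_g,\kappa_g$ and simply connected planar $\Omega$, and the regularity classes $v\in W^{2,2}$, $w\in W^{1,2}$ emerge automatically from the construction. The geometric interpretation appended in the statement -- that (ii) is the linearisation of the Gauss and Codazzi-Mainardi equations associated with the metric $\mathrm{Id}+2h^2\mathrm{sym}(\epsilon_g)_{2\times 2}$ and shape operator $h(\mathrm{sym}\,\kappa_g)_{2\times 2}$ -- is not needed for the proof, but follows by expanding the standard fundamental-form compatibility equations in powers of $h$ and retaining the leading nontrivial terms.
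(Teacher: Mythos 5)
Your proof is correct and takes essentially the same route as the paper: reduce $\mathcal{I}_g(w,v)=0$ to two pointwise equations via positive definiteness of $\mathcal{Q}_2$ on symmetric matrices, then use the Poincar\'e lemma characterization of vanishing $\mathrm{curl}$ and the Saint-Venant characterization of vanishing $\mathrm{curl}^T\mathrm{curl}$ on the simply connected domain, together with $\mathrm{curl}^T\mathrm{curl}(\nabla v\otimes\nabla v)=-2\det\nabla^2 v$. The only difference is that the paper writes the argument as a single chain of equivalences from (ii) to (i) while you unfold the two directions separately, and the paper carries out the Gauss-Codazzi-Mainardi expansion in full (substituting $I=\mathrm{Id}+2h^2(\mathrm{sym}\,\epsilon_g)_{2\times2}$, $II=h(\mathrm{sym}\,\kappa_g)_{2\times2}$ into the compatibility system and keeping leading-order terms), which you only sketch; since that identification is part of the lemma statement, it is worth spelling out if this is to stand as a complete proof.
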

\begin{proof}
Recall that for a matrix field $B\in L^2(\Omega, \mathbb R^{2\times 2}_{sym})$ 
the following two assertions hold true:
\begin{equation}\label{rule1}
\mbox{curl } B =0 \Longleftrightarrow
B = -\nabla^2 v \quad \mbox{ for some } v\in W^{2,2}(\Omega,\mathbb{R})
\end{equation} 
\begin{equation}\label{rule2}
\mathrm{curl}^T \mathrm{curl}~ B =0 \Longleftrightarrow B = \mbox{sym } \nabla w
\quad \mbox{ for some } w \in W^{1,2}(\Omega,{\mathbb R}^2).
\end{equation}
By (\ref{rule1}), the first identity in (ii) is equivalent to:
$$(\mathrm{sym}~ \kappa_g)_{2\times 2} = - \nabla^2 v.$$ 
Consequently, the second identity in (ii) becomes:
$$ \mathrm{curl}^T\mathrm{curl} \,\Big (-\frac 12 \nabla v\otimes \nabla v 
+ (\mathrm{sym} ~\epsilon_g)_{2\times 2} \Big ) =0, $$ 
which, in view of (\ref{rule2}), is equivalent to the existence of $w$ with:
$$ \mathrm{sym }\nabla w +\frac{1}{2}\nabla v\otimes \nabla v
- (\mathrm{sym}~ \epsilon_g)_{2\times 2}= 0. $$ 
Since $\mathcal {Q}_2$ is positive definite on symmetric matrices
we see that indeed (ii) is equivalent to the vanishing of both terms in
$\mathcal{I}_g$.


To identify the equations in (ii), recall the Gauss-Codazzi-Meinardi system \cite{HH}:
\begin{equation}\label{Gauss}
\begin{split}
&\partial_2 L - \partial_1 M = L \Gamma^1_{12} + M(\Gamma^2_{12} - \Gamma^1_{11}) 
- N \Gamma^2_{11},\\
&\partial_2M- \partial_1N = L\Gamma^1_{22} + M(\Gamma^2_{22} - \Gamma^1_{21}) 
- N \Gamma^2_{21},\\
&\qquad\qquad LN-M^2 = K(EG-F^2)
\end{split}
\end{equation} 
which provides the necessary and sufficient conditions for existence of 
a surface with the first and second fundamental forms:
\begin{equation*}
I= [g_{\alpha\beta}]= \left [ 
\begin{array}{cc} 
E & F \\ F & G
\end{array}
\right ] \quad \mbox{and}  \quad 
II =
 \left [ \begin{array}{cc}
L & M \\ M & N 
\end{array}
\right ].
\end{equation*} 
In (\ref{Gauss}) $\Gamma^i_{jk}$ denote the Christoffel symbols,
and $K$ stands for the Guassian curvature, which can be calculated 
from $I$ and $II$. Substituting now $I= \mathrm{Id} + 2 h^2 (\epsilon')_{2\times 2}$, 
$II = h(\kappa')_{2\times 2}$, where  $\epsilon'$ and $\kappa'$  
respectively  denote the symmetric parts of 
$\epsilon_g$ and $\kappa_g$, and taking into account the relations:
$$ \Gamma^k_{ij} = \frac 12 g^{kl} (\partial_j g_{il}+ \partial_i g_{jl} 
- \partial_l g_{ij})= 
(\partial_j \epsilon'_{ik}+ \partial_i \epsilon'_{jk} - \partial_k \epsilon'_{ij})h^2  
+ \mathcal{O}(h^2), $$ 
$$ K= \frac{R_{1212}}{EG-F^2} $$ 
we directly obtain the first identity in (ii):
\begin{equation*}
h \Big (\partial_2 (\kappa')_{11} - \partial_1 (\kappa')_{12}\Big ) = \mathcal{O}(h^2) 
\quad \mbox{ and } \quad
h \Big ( \partial_2 (\kappa')_{12}- \partial_1 (\kappa')_{22}\Big ) 
= \mathcal{O}(h^2), 
\end{equation*} 
and $h^2\mbox{det } (\kappa')_{2 \times 2}  = R_{1212}$.
Now recall that the Riemann curvatures are given by:
$$ R_{ijkl} =  g_{lm} (\partial_k \Gamma^m_{ij} - \partial_j \Gamma^m_{ik} 
+ \Gamma^n_{ij} \Gamma^m_{nk} - \Gamma^n_{ik} \Gamma^m_{nj}) 
= \partial_k \Gamma^l_{ij} - \partial_j \Gamma^l_{ik} + \mathcal{O}(h^4). $$ 
Hence, after straightforward calcuations, we obtain:
$$ h^2 \mbox{det}\, (\kappa')_{2 \times 2} = R_{1212} 
= \partial_2 \Gamma^2_{11} - \partial_1 \Gamma^2_{12} + \mathcal{O}(h^4) = 
- h^2 \Big (\mbox{curl}^T \mbox{curl} (\epsilon')_{2\times 2} \Big )+ \mathcal{O}(h^4),$$
which yields the second identity in (ii).   
\end{proof}


\section{Appendix A.
Approximating low energy deformations: a proof
of Theorem \ref{thapprox}}
The first crucial observation follows from the below rigidity estimate, 
which reproduces that of \cite{LePa1}, and it is a non-Euclidean 
version of the bound in \cite{FJMgeo}:
\begin{lemma}\label{lemrigidity}
For every $u\in W^{1,2}(\Omega^h,\mathbb{R}^3)$ and every $x_0\in\Omega$ 
there exists $R\in SO(3)$ such that:
$$\frac{1}{h}\int_{\Omega^h}|\nabla u(x) - Ra^h(x_0)|^2~\mathrm{d}x\leq
C \left(I^h_0(u) + (\mathrm{diam }~ \Omega^h)^2 Var^2(a^h) |\Omega| \right).$$
The constant $C$ above depends on 
$\|a^h\|_{L^\infty}$, $\|(a^h)^{-1}\|_{L^\infty}$, and on the domain
$\Omega^h$. Its dependence on $\Omega^h$ is uniform for a family
of plates which are bilipschitz equivalent with controlled Lipschitz constants.
\end{lemma}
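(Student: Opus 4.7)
\noindent\emph{Proof proposal.}
My plan is to reduce Lemma \ref{lemrigidity} to the classical Friesecke--James--M\"uller geometric rigidity estimate by freezing the growth tensor at $x_0$ and absorbing it with a linear change of variables. Set $A_0 := a^h(x_0) \in GL(3)$ and consider the substitution $v(y) := u(A_0^{-1} y)$ on the image domain $A_0(\Omega^h)$, for which $\nabla v(y) = \nabla u(A_0^{-1} y) \, A_0^{-1}$. Applying the classical FJM rigidity to $v$ produces an $R \in SO(3)$ such that, after reversing the change of variables,
\begin{equation*}
\int_{\Omega^h} |\nabla u - R A_0|^2 \, \mathrm{d}x \leq C \int_{\Omega^h} \mathrm{dist}^2(\nabla u, SO(3) A_0) \, \mathrm{d}x,
\end{equation*}
where the overall constant acquires factors of $\|A_0\|^2 \|A_0^{-1}\|^2$ and is hence controlled by $\|a^h\|_{L^\infty}$ and $\|(a^h)^{-1}\|_{L^\infty}$ through hypothesis $(\spadesuit)$.

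Next, I would estimate the right-hand side pointwise so as to isolate the contribution of $I^h_0(u)$. For a.e.\ $x \in \Omega^h$, choose $\tilde R(x) \in SO(3)$ attaining $\mathrm{dist}(\nabla u(x) a^h(x)^{-1}, SO(3))$; the triangle inequality gives
\begin{equation*}
\mathrm{dist}(\nabla u(x), SO(3) A_0) \leq \|a^h\|_{L^\infty} \, \mathrm{dist}\bigl(\nabla u(x) a^h(x)^{-1}, SO(3)\bigr) + |a^h(x) - a^h(x_0)|.
\end{equation*}
Squaring, integrating over $\Omega^h$, and dividing by $h$, the first summand contributes $C\, I^h_0(u)$ by the very definition of $I^h_0$. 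For the second summand, I would split $x = (x',x_3)$, $x_0 = (x_0', 0)$, and use the two constituents of $\mathrm{Var}(a^h)$ in the in-plane and out-of-plane directions separately to obtain the pointwise bound $|a^h(x) - a^h(x_0)| \leq C \, \mathrm{diam}(\Omega^h) \, \mathrm{Var}(a^h)$. Its $L^2$-integral over $\Omega^h$ (volume $h|\Omega|$), divided by $h$, is precisely the advertised term $C (\mathrm{diam}\,\Omega^h)^2 \mathrm{Var}^2(a^h)\,|\Omega|$.

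The delicate point, matching the uniformity clause in the lemma, is the domain dependence of the FJM constant in the first step: one must check that the constant obtained on $A_0(\Omega^h)$ remains controlled uniformly as $x_0$ varies. This follows from the classical fact that the FJM rigidity constant is stable under bilipschitz equivalence with controlled Lipschitz constants, combined with the observation that $A_0$ is an invertible linear map whose operator norm and inverse norm are uniformly bounded in $x_0$ (and $h$) by $(\spadesuit)$; hence $A_0(\Omega^h)$ is bilipschitz equivalent to $\Omega^h$ with constants under control, and inherits the same uniformity. Combining the two displayed estimates then delivers the inequality of the lemma.
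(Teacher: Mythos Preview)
Your proposal is correct and follows essentially the same approach as the paper: freeze $A_0 = a^h(x_0)$, apply the Friesecke--James--M\"uller rigidity estimate after the linear change of variables $v(y) = u(A_0^{-1}y)$, then split $\mathrm{dist}(\nabla u, SO(3)A_0)$ into the $I^h_0$ contribution and the oscillation $|a^h(x)-a^h(x_0)|$, the latter being controlled by $\mathrm{diam}(\Omega^h)\,\mathrm{Var}(a^h)$ via the in-plane/out-of-plane decomposition. The uniformity discussion via bilipschitz stability of the FJM constant is likewise the same as in the paper.
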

\begin{proof}
Recall that according to the celebrated result in \cite{FJMgeo}, for every
$v\in W^{1,2}(\mathcal{V},\mathbb{R}^n)$ defined on an open, bounded set 
$\mathcal{V}\subset\mathbb{R}^n$, there exists $R\in SO(3)$ such that:
\begin{equation}\label{basrig} 
\int_{\mathcal{V}} |\nabla v - R|^2 \leq C_{\mathcal{V}} 
\int_{\mathcal{V}} \mbox{dist}^2(\nabla v, SO(3)). 
\end{equation}
The constant $C_{\mathcal{V}}$ depends only on the domain $\mathcal{V}$
and it is uniform for a family of domains which are bilipschitz equivalent
with controlled Lipschitz constants.

In the present setting call 
$A_0= a^h(x_0)$ and apply (\ref{basrig}) 
to the vector field $v(y) = u(A^{-1}_0y)\in W^{1,2}(A_0\Omega^h, \mathbb{R}^3)$.
After change of variables we obtain:
$$ \exists R\in SO(3) \qquad
\int_{\Omega^h} |(\nabla u) A^{-1}_0 - R|^2 \leq C_{A_0\Omega^h} 
\int_{\Omega^h} \mbox{dist}^2((\nabla u) A_0^{-1}, SO(3)). $$
Since the set $A_0\Omega^h$ is a bilipschitz image of $\Omega^h$, the constant
$C_{A_0\Omega^h}$ has a uniform bound $C$ 
depending on $|A_0|$, $|A_0^{-1}|$ and $\Omega^h$.  Further:
\begin{equation*}
\begin{split}
\frac{1}{h}\int_{\Omega^h} &|\nabla u - RA_0|^2 \leq C|A_0|^4  
h^{-1}\int_{\Omega^h} \mbox{dist}^2(\nabla u, SO(3) A_0)\\
&\leq  C |A_0|^4 h^{-1}\left(\int_{\Omega^h} 
\mbox{dist}^2\Big(\nabla u(x), SO(3) a^h(x)\Big)~\mbox{d}x
+ \int_{\Omega^h} |a^h(x) - a^h(x_0)|^2~\mbox{d}x\right)\\
&\leq C |A_0|^6\left(I_0^h(u) + h^{-1}\int_{\Omega^h} |a^h(x) - a^h(x_0)|^2\right).
\end{split}
\end{equation*}
The claim follows now through:
\begin{equation}\label{3.0}
\begin{split}
\int_{\Omega^h} |{a^h}(x) &- {a^h}(x_0)|^2~\mbox{d}x \leq
2\int_{\Omega^h}(|a^h(x) - a^h(x')|^2 + |a^h(x') - a^h(x_0)|^2)~\mbox{d}x\\
&\leq C \int_{\Omega^h}h^2 |\partial_3 a^h|^2
+ \|\nabla_{tan} (a^h_{~|\Omega})\|_{L^\infty}^2 
(\mathrm{diam } ~\Omega^h)^2~\mbox{d}x.
\end{split}
\end{equation}
\end{proof}

\noindent{\bf Proof of Theorem \ref{thapprox}}.

\noindent The proof follows the line of Theorem 10 \cite{FJMhier} 
(see also Lemma 8.1 \cite{lemopa1}).

{\bf 1.} Let $D_{x',h}=B(x',h)\cap\Omega$ be $2$d curvilinear discs
in $\Omega$ of radius $h$ and centered at a given $x'\in\Omega$. 
On each 3d plate $B_{x',h}=D_{x',h}\times(-h/2, h/2)$ 
use Lemma \ref{lemrigidity} to obtain $R_{x',h}\in SO(3)$ such that:
\begin{equation}\label{pr1}
\begin{split}
\frac{1}{h}&\int_{B_{x',h}} |\nabla u^h - R_{x',h}a^h(x)|^2 \\
&\qquad
\leq C \Bigg(h^{-1}\int_{B_{x',h}}
\mbox{dist}^2(\nabla u^h(a^h)^{-1},SO(3))~\mbox{d}x 
+ h^2 Var^2(a^h) |D_{x',h}|\Bigg)
\end{split}
\end{equation}
with a universal constant $C$, depending only on 
the Lipschitz constant of $\partial\Omega$, but independent of $h$.
Notice that we have also used (\ref{3.0}) to exchange $a^h(x')$ with $a^h(x)$
in the left hand side above.

Consider now the family of mollifiers 
$\eta_{x'}:\Omega\longrightarrow \mathbb{R}$,
parametrized by $x'\in\Omega$:
$$\eta_{x'}(z') 
= \frac{\theta(|z' - x'|/h)}{h\int_{\Omega}\theta(|y'-x'|/h)~\mbox{d}y'},$$
where $\theta\in\mathcal{C}_c^\infty([0,1))$ is a nonnegative cut-off function, 
equal to a nonzero constant in a neighborhood of $0$. Then $\eta_{x'}(z')=0$
for all $z'\not\in D_{x,h}$ and:
$$\int_\Omega \eta_{x'} = h^{-1},\quad
\|\eta_{x'}\|_{L^\infty}\leq Ch^{-3}, \quad 
\|\nabla_{x'}\eta_{x'}\|_{L^\infty}\leq Ch^{-4}.$$

{\bf 2.} Define now $Q^h\in W^{1,2}(\Omega,\mathbb{R}^{3\times 3})$:
$$ Q^h(x')= \int_{\Omega^h} \eta_{x'}(z') \nabla u^h(z)a^h(z)^{-1}~\mbox{d}z.$$
By (\ref{pr1}), we obtain the following pointwise estimates, 
for every $x'\in\Omega$:
\begin{equation}\label{3.5}
\begin{split}
|Q^h(x') &- R_{x',h}|^2 \leq \left( \int_{\Omega^h}\eta_{x'}(z') 
\left(\nabla u^h(z)a^h(z)^{-1} - R_{x',h}\right)~\mbox{d}z\right)^2 \\
&\leq \int_{\Omega^h}|\eta_{x'}(z')|^2~\mbox{d}z \cdot
\int_{B_{x',h}} |\nabla u^h (a^h)^{-1} - R_{x',h}|^2\\
& \leq Ch^{-3} \left(\int_{B_{x',h}} \mbox{dist}^2(\nabla u^h(a^h)^{-1},SO(3))
~\mbox{d}z + h^3 Var^2(a^h)|D_{x',h}|\right)
\end{split}
\end{equation}
\begin{equation*}
\begin{split}
|\nabla Q^h(x')|^2 & 
= \left(\int_{\Omega^h}(\nabla_{x'}\eta_{x'}(z'))
\left(\nabla u^h(z)a^h(z)^{-1} - R_{x',h}\right)~\mbox{d}z\right)^2\\
&\leq \int_{B_{x',h}}|\nabla_{x'}\eta_{x'}(z')|^2~\mbox{d}z \cdot
\int_{B_{x',h}} |\nabla u^h(a^h)^{-1} - R_{x',h}|^2\\
&\leq C h^{-5} \left(\int_{B_{x',h}} \mbox{dist}^2(\nabla u^h (a^h)^{-1},SO(3))
~\mbox{d}z + h^3 Var^2(a^h)|D_{x',h}|\right).
\end{split}
\end{equation*}
Applying the estimates above and in (\ref{pr1}) on doubled balls $B_{x', 2h}$ 
we arrive at:
\begin{equation*}
\begin{split}
\frac{1}{h}\int_{B_{x',h}}&|\nabla u^h(z)a^h(z)^{-1} - Q^h(z')|^2~\mbox{d}z \\
&\leq 
C \left( \frac{1}{h}\int_{B_{x',h}}|\nabla u^h(a^h)^{}-1 - R_{x',2h}|^2
+ \frac{1}{h}\int_{B_{x',2h}}|Q^h(z') - R_{x',2h}|^2~\mbox{d}z\right)\\
&\leq C \left(h^{-1}\int_{B_{x',2h}} \mbox{dist}^2(\nabla u^h(a^h)^{-1},SO(3))
~\mbox{d}z + h^2 Var^2(a^h)|D_{x',2h}|\right),
\end{split}
\end{equation*}
\begin{equation*}
\int_{D_{x',h}}|\nabla Q^h|^2 \leq 
Ch^{-2} \left(h^{-1}\int_{B_{x',2h}} \mbox{dist}^2(\nabla u^h(a^h)^{-1},SO(3))
~\mbox{d}z + h^2 Var^2(a^h)|D_{x',2h}|\right).
\end{equation*}
Consider a finite covering $\Omega = \bigcup D_{x',h}$  
whose intersection number is independent of $h$ (as it depends only on the 
Lipschitz constant of $\partial\Omega$). 
Sum the above bounds:
\begin{equation}\label{pr2} 
\frac{1}{h}\int_{\Omega^h}|\nabla u^h(x) - Q^h(x')a^h(x)|^2~\mathrm{d}x
\leq C \left(I_0^h(u^h) + h^2 Var^2(a^h)\right),
\end{equation}
\begin{equation}\label{pr3} 
\int_\Omega |\nabla Q^h|^2 \leq C h^{-2} \left(I_0^h(u^h) + h^2 Var^2(a^h)\right)
\end{equation}

{\bf 3.} Notice that by (\ref{3.5}):
$$\mbox{dist}^2(Q^h(x'),SO(3)) \leq |Q^h(x') - R_{x',h}|^2
\leq C\left(h^{-2}I_0^h(u^h) + h^2 Var^2(a^h)\right) 
\rightarrow 0 \quad \mbox{as } h\to 0, $$
in view of assumption $(\spadesuit)$. We may therefore, 
for small $h$, project $Q^h$ onto $SO(3)$:
$$R^h(x') = \mathbb{P}_{SO(3)}(Q^h(x')).$$
We further have, by (\ref{pr2}):
\begin{equation*}
\begin{split}
\int_\Omega& |R^h(x') - Q^h(x')|^2~\mbox{d}x' =
\int_\Omega \mbox{dist}^2(Q^h(x'),SO(3))
~\mbox{d}x' \\
&\leq C h^{-1}\Bigg(\int_{\Omega^h}|\nabla u^h(x)a^h(x)^{-1} - Q^h(x')|^2~\mbox{d}x
+ \int_{\Omega^h}\mbox{dist}^2(\nabla u^h(x)a^h(x)^{-1},SO(3))~\mbox{d}x\Bigg)\\
&\leq C\left(I_0^h(u^h) + h^2Var^2(a^h)\right).
\end{split}
\end{equation*}
On the other hand $|\nabla R^h|\leq C |\nabla Q^h|$ and the claim
follows by (\ref{pr2}) and (\ref{pr3}).
\endproof


\section{Appendix B: The $\Gamma$-convergence formalism}

Theorems \ref{liminf} and \ref{thmaindue} can be summarized
using the language of $\Gamma$-convergence \cite{dalmaso}.
Recall  that a sequence of functionals
$\mathcal{F}^h:X\longrightarrow \overline{\mathbb{R}}$ defined on a metric
space $X$, is said to $\Gamma$-converge, as $h\to 0$, to
$\mathcal{F}:X\longrightarrow \overline{\mathbb{R}}$ provided that
the following two conditions hold:
\begin{itemize}
\item[(i)] For any converging sequence $\{x^h\}$ in $X$:
\begin{equation*}\label{Gamma1}
\mathcal{F}\left(\lim_{h\to 0} x^h\right) \leq \liminf_{h\to 0}
\mathcal{F}^h(x^h).
\end{equation*}
\item[(ii)] For every $x\in X$, there exists a sequence $\{x^h\}$ converging
to $x$ and such that:
\begin{equation*}\label{Gamma2}
\mathcal{F}(x) = \lim_{h\to 0} \mathcal{F}^h(x^h).
\end{equation*}
\end{itemize}

\begin{corollary}
Define the sequence of functionals:
\begin{equation*} 
\begin{split}
&\mathcal{F}^h: W^{1,2}(\Omega^1,\mathbb{R}^3)
\times W^{1,2}(\Omega,\mathbb{R}^3)\times W^{1,2}(\Omega, \mathbb{R}^2)
\longrightarrow \overline{\mathbb{R}}\\
&\mathcal{F}^h(y, V, w) = \left\{\begin{array}{ll}
\displaystyle{\frac{1}{h^4}I^h_W(y(x',hx_3))} & 
\mbox{ if }~ V(x') = \fint y(x',t) - x'~\mathrm{d}t
\mbox{ and }~ w = h^{-1}V_{tan},\\
+\infty & \mbox{ otherwise.}
\end{array}\right.
\end{split}
\end{equation*}
Then $\mathcal{F}^h$ $\Gamma$-converge, as $h\to 0$, to the following functional:
\begin{equation*} 
\mathcal{F}(y, V, w) = \left\{\begin{array}{ll}
\mathcal{I}_g(w,v) & ~\mathrm{ if }~ y(x',t) = x'\mbox{ and } ~V=(0,0,v)^T\in W^{2,2},\\
+\infty & \mbox{ otherwise.}
\end{array}\right.
\end{equation*}
Consequently, the (global) approximate minimizers of $\mathcal{F}^h$
converge to a global minimizer of $\mathcal{F}$.
\end{corollary}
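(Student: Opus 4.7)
The corollary packages Theorems \ref{compactness}, \ref{liminf} and \ref{thmaindue} into a single $\Gamma$-convergence statement, so my plan is to derive the two $\Gamma$-convergence inequalities separately, in each case first reducing to the hypotheses of the already-established result and then simply invoking it. The convergence of minimizers is then standard.

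For the liminf inequality, I take an arbitrary sequence $(y^h, V^h, w^h) \to (y, V, w)$ strongly in the product space. If $\liminf \mathcal{F}^h$ is infinite the inequality is trivial, so I pass to a subsequence along which $\mathcal{F}^h(y^h, V^h, w^h) \leq C$. By the definition of $\mathcal{F}^h$ this forces $V^h(x') = \fint_{-1/2}^{1/2} y^h(x',t)\,\mathrm{d}t - x'$, $w^h = h^{-1} V^h_{tan}$, and $I^h_W(u^h) \leq Ch^4$ for $u^h(x',x_3) = y^h(x',x_3/h)$. I invoke Theorem \ref{compactness} to produce rotations $\bar R^h \in SO(3)$ and translations $c^h$ along which the normalized deformations $\tilde y^h = (\bar R^h)^T y^h - c^h$ and the associated $\tilde V^h$, $h^{-1}\tilde V^h_{tan}$ converge to $x'$, $(0,0,\tilde v)^T$, and $\tilde w$ with $\tilde v \in W^{2,2}$. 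Since $y^h \to y$ in $W^{1,2}(\Omega^1)$ by hypothesis while simultaneously $\bar R^h \tilde y^h + \bar R^h c^h \to y$, extracting a further subsequence $\bar R^h \to \bar R$, $\bar R^h c^h \to \bar c$, I obtain $y = \bar R\, x' + \bar c$. For the target value $\mathcal{F}(y, V, w)$ to be finite the definition requires $y(x',t) = x'$, which (modulo the usual translational indeterminacy that I absorb into $c^h$) forces $\bar R = I$, $\bar c = 0$; under this normalization $V^h - \tilde V^h \to 0$ in $W^{1,2}$, so $V = (0,0,v)^T$ with $v = \tilde v$ and $w = \tilde w$. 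Theorem \ref{liminf} now applies to $u^h$ and yields
\[
\liminf_{h\to 0}\mathcal{F}^h(y^h, V^h, w^h) = \liminf_{h\to 0}\frac{1}{h^4} I^h_W(u^h) \;\geq\; \mathcal{I}_g(w,v) = \mathcal{F}(y,V,w).
\]

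For the recovery sequence, given $(y, V, w) = (x', (0,0,v)^T, w)$ with $v \in W^{2,2}$ and $w \in W^{1,2}$, I apply Theorem \ref{thmaindue} to produce $u^h \in W^{1,2}(\Omega^h, \mathbb{R}^3)$ realizing the asymptotic energy $\mathcal{I}_g(w,v)$. Setting $y^h(x',x_3) = u^h(x',hx_3)$, $V^h = \fint y^h\,\mathrm{d}t - x'$, and $w^h = h^{-1} V^h_{tan}$, items (i)--(iii) of Theorem \ref{thmaindue} furnish convergence $(y^h, V^h, w^h) \to (y, V, w)$ in the product topology, while (iv) gives $\mathcal{F}^h(y^h, V^h, w^h) \to \mathcal{F}(y,V,w)$. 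The convergence of (approximate) minimizers is then the standard consequence of $\Gamma$-convergence together with equi-coercivity, and the equi-coercivity is exactly Theorem \ref{compactness}; this has already been recorded in Corollary \ref{minsconverge}(ii) and requires no new input. The main obstacle is entirely notational/bookkeeping: reconciling the intrinsic rigid-motion indeterminacy of Theorem \ref{compactness} with the strong convergence assumed in the $\Gamma$-convergence statement, which I resolve by using the finiteness of $\mathcal{F}$ at the limit to pin down $\bar R^h \to I$ and $c^h \to 0$.
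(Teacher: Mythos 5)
The paper offers no proof of this corollary --- it is stated as an immediate consequence of Theorems \ref{compactness}, \ref{liminf} and \ref{thmaindue} --- so there is nothing to compare your argument against; the only question is whether your deduction is sound. Your treatment of the recovery sequence and of the convergence of minimizers (deferring to Corollary \ref{minsconverge}(ii) for equi-coercivity) is correct.

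The liminf part, however, contains a genuine logical gap. You argue: "For the target value $\mathcal{F}(y,V,w)$ to be finite the definition requires $y(x',t)=x'$, which ... forces $\bar R = I$, $\bar c = 0$." This is backwards. In the $\Gamma$-liminf inequality one must prove $\mathcal{F}(\lim x^h) \leq \liminf\mathcal{F}^h(x^h)$ for \emph{every} converging sequence, without presupposing that $\mathcal{F}$ is finite at the limit. If $\bar R\neq I$ or $\bar c\neq 0$, you would have $\mathcal{F}(y,V,w)=+\infty$ while $\liminf\mathcal{F}^h(x^h)\leq C<\infty$, and the inequality would simply fail. The correct route is the opposite one: the constraints built into $\mathcal{F}^h$ --- namely $V^h = h^{-1}\big(\fint y^h - x'\big)$ and $w^h = h^{-1}V^h_{tan}$ --- together with the \emph{assumed} convergence of the full triple $(y^h,V^h,w^h)$, force $\bar R^h \to I$ and $c^h\to 0$; indeed, any residual rotation or translation that does not vanish to order $O(h)$ makes $V^h$ or $w^h$ blow up in $W^{1,2}$ because of the $h^{-1}$ factors. (Incidentally, the corollary as printed omits the $h^{-1}$ in the $V$-constraint; this is almost certainly a typo, since otherwise the limit $V$ would always be $0$ and could never equal $(0,0,v)^T$ for nonzero $v$, trivializing the statement. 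That factor is precisely what makes your rigid-motion argument go through.)

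There is a further, subtler point that the phrase "$V^h - \tilde V^h \to 0$, so $V=(0,0,v)^T$ with $v=\tilde v$ and $w=\tilde w$" conceals. Knowing only $\bar R^h\to I$ and $c^h\to 0$ at rate $O(h)$ gives $V = (0,0,\tilde v)^T$ plus an affine term coming from the infinitesimal rigid motion $\lim h^{-1}(\bar R^h - I)$ and $\lim h^{-1}c^h$; likewise $w$ differs from $\tilde w$ by a corresponding quadratic shift. One must then verify that $\mathcal{I}_g$ is invariant under the joint shift $v\mapsto v+\ell$, $w\mapsto w - v\,\nabla\ell - \tfrac12(\nabla\ell\cdot x')\nabla\ell$ with $\ell$ affine --- true because the bending term sees only $\nabla^2 v$ and the stretching argument $\mathrm{sym}\nabla w + \tfrac12\nabla v\otimes\nabla v$ is unchanged --- before concluding $\mathcal{I}_g(w,v)=\mathcal{I}_g(\tilde w,\tilde v)$ and hence the desired bound. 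You flag this issue as "notational/bookkeeping," which is fair as a diagnosis, but the resolution you sketch (conditioning on finiteness of the limit) does not carry it out; a correct proof must derive the normalization from the $h^{-1}$-weighted constraints and check the rigid-motion invariance of $\mathcal{I}_g$.
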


\noindent{\bf Acknowledgments.}
M.L. was partially supported by the NSF grants DMS-0707275 and DMS-0846996 and by the Center for Nonlinear Analysis (CNA) under
the NSF grants 0405343 and 0635983. L.M. was partially supported by the Harvard NSF-MRSEC and the MacArthur Foundation. R.P. was partially 
supported by the NSF grant DMS-0907844.


\begin{thebibliography}{9999}

\bibitem{Audoly2004} B. Audoly and A. Boudaoud, (2004) Self-similar
  structures near boundaries in strained systems. {\em Phys. Rev. Lett.} {\bf 91}, 
086105-086108.

\bibitem{ciarbookvol3} P.G. Ciarlet, \textit{Mathematical Elasticity}, 
North-Holland, Amsterdam (2000).

\bibitem{Dervaux2008} J. Dervaux and M. Ben Amar, (2008) Morphogenesis of
  growing soft tissues. {\em Phys. Rev. Lett.} {\bf 101}, 068101-068104.






\bibitem{dalmaso} G. Dal Maso, 
\textit{An introduction to $\Gamma$-convergence}, 
Progress in Nonlinear Differential Equations and their Applications, {\bf 8}, 
Birkh\"auser, MA (1993).


\bibitem{kupferman} E. Efrati, E. Sharon and R. Kupferman,
\textit{Elastic theory of unconstrained non-Euclidean plates},
J. Mechanics and Physics of Solids, {\bf 57} (2009), 762--775.


\bibitem{FJMgeo} G. Friesecke, R. James and S. M\"uller, 
\textit{A theorem on geometric rigidity and the derivation of nonlinear 
plate theory from three dimensional elasticity}, Comm. Pure. Appl. Math., 
{\bf 55} (2002), 1461--1506.  

\bibitem{FJMhier} G. Friesecke, R. James and S. M\"uller, \textit{A hierarchy 
of plate models derived from nonlinear elasticity by gamma-convergence}, 
Arch. Ration. Mech. Anal.,  {\bf 180}  (2006),  no. 2, 183--236.


\bibitem{gromov} M. Gromov,
\textit{Partial Differential Relations}, Springer-Verlag, Berin-Heidelberg, 
(1986).

 
\bibitem{HH} Q. Han and J.-X. Hong, \textit{Isometric embedding of Riemannian 
manifolds in Euclidean spaces}, Mathematical Surveys and Monographs, {\bf 130} 
American Mathematical Society, Providence, RI (2006).




\bibitem{karman} T. von K\'arm\'an, \textit{Festigkeitsprobleme im Maschinenbau}, 
in Encyclop\"adie der Mathematischen Wissenschaften. Vol. IV/4, pp. 311-385, 
Leipzig, 1910.

\bibitem{kirchhoff} G. Kirchhoff, \textit{\"Uber das gleichgewicht und die bewegung einer 
elastischen scheibe} J. Reine Angew. Math., (1850), 51--88

\bibitem{klein} Y. Klein, E. Efrati and E. Sharon,
\textit{Shaping of elastic sheets by prescription of non-Euclidean metrics}, 
Science, {\bf 315} (2007), 1116--1120. 

\bibitem{Koehl2008}  Koehl MAR, Silk, WK, Liang HY, Mahadevan L (2008) How kelp produce blade shapes 
suited to different flow regimes: A new wrinkle. {\em Integ. and Comp. Biol.} {\bf 48},
834-851.



                             
\bibitem{lemopa1} M. Lewicka, M.G. Mora and M.R. Pakzad, \textit{Shell theories 
arising as low energy $\Gamma$-limit of 3d nonlinear elasticity},
to appear in Ann. Scuola Norm. Sup. Pisa Cl. Sci. (2009).


\bibitem{lemopa3}  M. Lewicka, M.G. Mora and M.R. Pakzad,
\textit{The matching property of infinitesimal isometries on elliptic surfaces and
    elasticity of thin shells}, submitted (2008).

\bibitem{LePa1}  M. Lewicka and M.R. Pakzad, 
\textit{Scaling laws for non-Euclidean plates and the 
    $W^{2,2}$ isometric immersions of Riemannian metrics}, submitted (2009). 

\bibitem{LePa2} M. Lewicka and M. Pakzad,
    \textit{The infinite hierarchy of elastic shell models; some
    recent results and a conjecture}, submitted (2009). 




\bibitem{Maha}  H. Liang and L. Mahadevan, \textit{The shape of a long
    leaf},  Proc. Nat. Acad. Sci. (2009).











\bibitem{Marder2003} Marder M (2003) The shape of the edge of a leaf. 
{\em Found. of Physics} {\bf 33}, 1743-1768.

\bibitem{Nath2003} Nath U, Crawford BCW,
Carpenter R, Coen E (2003) Genetic control of surface curvature. {\em Science} {\bf 299}, 1404-1407.

\bibitem{Rod}  E.K. Rodriguez, A. Hoger and A. McCulloch, J.Biomechanics
{\bf 27}, 455 (1994).


\bibitem{Sharon2007} Sharon E, Roman B, Swinney HL (2007) Geometrically driven wrinkling observed in free plastic sheets and leaves. {\em Phys. Rev. 
E} {\bf 75}, 046211-046217.

\bibitem{Spivak} M. Spivak, \textit{A Comprehensive Introduction to Differential 
Geometry, Vol V}, 2nd edition, Publish or Perish Inc. (1979).




\end{thebibliography}
\end{document}